\newtheorem{thm}{Theorem}[section]
\newtheorem{lem}[thm]{Lemma}
\newtheorem{prop}[thm]{Proposition}
\newtheorem{defn}{Definition}
\begin{document}
\baselineskip=17pt

\title{Complexity of Scott Sentences}

\author{Rachael Alvir\\
Department of Mathematics\\
University of Notre Dame\\
255 Hurley\\
Notre Dame, IN 46556, USA\\
E-mail: ralvir@nd.edu
\and
Julia F.\ Knight\\
Department of Mathematics\\
University of Notre Dame\\
255 Hurley\\
Notre Dame, IN 46556, USA\\
E-mail: Julia.F.Knight.1@nd.edu\\
\and
Charles McCoy CSC\\
Department of Mathematics\\
University of Portland\\
5000 N. Willamette Blvd.,\\
Portland, Oregon 97203, USA\\
E-mail: mccoy@up.edu}

\date{\today}

\maketitle

\renewcommand{\thefootnote}{}

\footnote{2010 \emph{Mathematics Subject Classification}: Primary 	03D99; Secondary 	03E15.}

\footnote{\emph{Key words and phrases}: Infinitary Logic, Scott Sentence, Borel Hierarchy, Finitely Generated Groups, Computable Structure Theory.}

\renewcommand{\thefootnote}{\arabic{footnote}}
\setcounter{footnote}{0}

\begin{abstract}

We give effective versions of some results on Scott sentences.  We show that if $\mathcal{A}$ has a computable $\Pi_\alpha$ Scott sentence, then the orbits of all tuples are defined by formulas that are computable $\Sigma_\beta$ for some $\beta <\alpha$.  (This is an effective version of a result of Montalb\'{a}n \cite{Montalban}.)  We show that if a countable structure $\mathcal{A}$ has a computable $\Sigma_\alpha$ Scott sentence and one that is computable $\Pi_\alpha$, then it has one that is computable $d$-$\Sigma_\beta$ for some $\beta < \alpha$.  (This is an effective version of a result of A.\ Miller \cite{AMiller}.)  We also give an effective version of a result of D.\ Miller \cite{DMiller}.  Using the non-effective results of Montalb\'{a}n and A.\ Miller, we show that a finitely generated group has a $d$-$\Sigma_2$ Scott sentence iff the orbit of some (or every) generating tuple is defined by a $\Pi_1$ formula.  Using our effective results, we show that for a computable finitely generated group, there is a computable $d$-$\Sigma_2$ Scott sentence iff the orbit of some (every) generating tuple is defined by a computable $\Pi_1$ formula.          

\end{abstract}

\section{Introduction}

The $L_{\omega_1\omega}$-formulas are infinitary formulas in which the disjunctions and conjunctions are over countable sets, and the strings of quantifiers are finite.  We consider $L_{\omega_1\omega}$-formulas with only finitely many free variables.  There is no prenex normal form for $L_{\omega_1\omega}$-formulas.  We cannot, in general, bring the quantifiers to the front.  However, we can bring the negations inside, and this gives a kind of normal form.  Formulas in this normal form are classified as $\Sigma_\alpha$ or $\Pi_\alpha$ for countable ordinals $\alpha$.  

\begin{enumerate}

\item  A formula $\varphi(\bar{x})$ is $\Sigma_0$ and $\Pi_0$ if it is finitary quantifier-free.

\item  Let $\alpha > 0$.  

\begin{enumerate}

\item  $\varphi(\bar{x})$ is $\Sigma_\alpha$ if it is a countable disjunction of formulas $(\exists\bar{u})\psi(\bar{x},\bar{u})$, where $\psi$ is $\Pi_\beta$ for some $\beta < \alpha$.

\item  $\varphi(\bar{x})$ is $\Pi_\alpha$ if it is a countable conjunction of formulas $(\forall\bar{u})\psi(\bar{x},\bar{u})$, where $\psi$ is $\Sigma_\beta$ for some $\beta < \alpha$. 

\end{enumerate}

\end{enumerate}

We use special notation for some further classes of formulas.

\begin{enumerate}

\item  A formula is $\Sigma_{<\alpha}$ (resp. $\Pi_{<\alpha}$) if it is $\Sigma_\beta$ (resp. $\Pi_\beta$) for some $\beta <\alpha$.

\item  A formula is $d$-$\Sigma_\alpha$ if it is the conjunction of a formula that is $\Sigma_\alpha$ and one that is $\Pi_\alpha$.

\end{enumerate}

\noindent
\textbf{Negations}.  For a $\Sigma_\alpha$ (resp. $\Pi_\alpha$) formula $\varphi$, in normal form, we write $neg(\varphi)$ for the natural $\Pi_\alpha$ (resp. $\Sigma_\alpha$) formula in normal form that is logically equivalent to the negation of $\varphi$.   

\bigskip
\noindent
\textbf{Computable infinitary formulas}.  Roughly speaking, the \emph{computable infinitary formulas} are formulas of $L_{\omega_1\omega}$ in which the infinite disjunctions and conjunctions are over c.e.\ sets.  For more about computable infinitary formulas, see \cite{AK}.  We classify the computable infinitary formulas as computable $\Sigma_\alpha$ or computable $\Pi_\alpha$ for computable ordinals $\alpha$. We may refer to computable $\Sigma_{<\alpha}$ formulas, or computable $d$-$\Sigma_\alpha$ formulas.

\bigskip

Scott \cite{Scott} proved the following.

\begin{thm} [Scott Isomorphism Theorem]

Let $\mathcal{A}$ be a countable structure for a countable language $L$.  Then there is a sentence of $L_{\omega_1\omega}$ whose countable models are just the isomorphic copies of $\mathcal{A}$.  

\end{thm}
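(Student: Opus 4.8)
The plan is to construct the Scott sentence directly from a family of canonical formulas describing the back-and-forth equivalence classes of tuples in $\mathcal{A}$, and then to show by a back-and-forth argument that these formulas pin down $\mathcal{A}$ up to isomorphism among countable structures. First I would define, for each finite tuple $\bar{a}$ from $\mathcal{A}$ and each ordinal $\alpha$, a formula $\varphi^\alpha_{\bar{a}}(\bar{x})$ by transfinite recursion: $\varphi^0_{\bar{a}}$ is the conjunction of all atomic and negated atomic formulas true of $\bar{a}$ (a countable conjunction, since $L$ is countable); at successor stages $\varphi^{\alpha+1}_{\bar{a}}(\bar{x})$ is
$$\bigwedge_{c \in A}(\exists y)\,\varphi^\alpha_{\bar{a}c}(\bar{x},y) \;\wedge\; (\forall y)\bigvee_{c \in A}\varphi^\alpha_{\bar{a}c}(\bar{x},y);$$
and at limit stages $\varphi^\lambda_{\bar{a}}$ is the conjunction of the $\varphi^\beta_{\bar{a}}$ for $\beta < \lambda$. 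Each of these is a genuine $L_{\omega_1\omega}$-formula, because at every stage the disjunctions and conjunctions range over the countable set $A$ or over the countably many earlier ordinals.

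Next I would isolate the Scott rank. For each fixed arity $n$, the formulas $\varphi^\alpha_{\bar{a}}$ induce equivalence relations $\sim_\alpha$ on the $n$-tuples of $\mathcal{A}$ (declaring $\bar{a}\sim_\alpha \bar{a}'$ when $\varphi^\alpha_{\bar{a}}$ and $\varphi^\alpha_{\bar{a}'}$ agree), and these relations only refine as $\alpha$ grows. Since $A$ is countable there are only countably many $n$-tuples, so each refining sequence of partitions stabilizes at a countable ordinal; taking the supremum over all $n$ yields a single countable ordinal $\alpha$ at which every $\sim_\alpha$ coincides with $\sim_{\alpha+1}$. I would record as a lemma the resulting back-and-forth property: whenever $\mathcal{A}\models\varphi^\alpha_{\bar{a}}(\bar{a}')$, the tuples $\bar{a}$ and $\bar{a}'$ admit matching one-point extensions at level $\alpha$.

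Then I would take as the candidate Scott sentence
$$\sigma \;=\; \varphi^\alpha_{\langle\rangle} \;\wedge\; \bigwedge_{\bar{a}}(\forall\bar{x})\Bigl[\varphi^\alpha_{\bar{a}}(\bar{x}) \rightarrow \varphi^{\alpha+1}_{\bar{a}}(\bar{x})\Bigr],$$
where $\bar{a}$ ranges over all finite tuples from $\mathcal{A}$. That $\mathcal{A}\models\sigma$ is immediate from the stabilization at $\alpha$. For the converse, given a countable $\mathcal{B}\models\sigma$, I would construct an isomorphism by a back-and-forth argument, maintaining the invariant that along the partial map $\bar{a}\mapsto\bar{b}$ one has $\mathcal{A}\models\varphi^\alpha_{\bar{a}}(\bar{a})$ and $\mathcal{B}\models\varphi^\alpha_{\bar{a}}(\bar{b})$; the conjunct $\varphi^\alpha_{\langle\rangle}$ initializes the construction, while the implications in $\sigma$, together with the definition of $\varphi^{\alpha+1}$, supply the extension steps in both directions.

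The main obstacle I anticipate is the stabilization step: verifying carefully that the sequence of equivalence relations refines monotonically (which requires checking that $\varphi^{\alpha+1}_{\bar{a}}$ implies $\varphi^\alpha_{\bar{a}}$) and that the supremum over all arities remains a countable ordinal, since this is exactly what guarantees $\sigma$ is a well-formed $L_{\omega_1\omega}$-sentence whose implication conjuncts faithfully encode the full back-and-forth system. Once this is in place, the back-and-forth verification is routine bookkeeping: the $\varphi^0$ part of the invariant forces matched tuples to satisfy the same atomic formulas, so the union of the partial maps is a surjective, injective, atomic-preserving map, which is the desired isomorphism.
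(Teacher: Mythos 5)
Your proposal is correct, but there is nothing in the paper to compare it against directly: the paper states the Scott Isomorphism Theorem without proof, citing Scott \cite{Scott}. What you have written is the classical Scott-analysis proof, and it is sound: the ranked formulas $\varphi^\alpha_{\bar{a}}$, the stabilization of the back-and-forth equivalence relations at a countable ordinal (your counting argument is right, since each strict refinement permanently separates at least one of the countably many pairs of tuples, and a pair first separated at a limit stage was already separated earlier), and the sentence $\sigma$ consisting of $\varphi^\alpha_{\langle\rangle}$ together with the implications $\varphi^\alpha_{\bar{a}}\rightarrow\varphi^{\alpha+1}_{\bar{a}}$. The closest material in the paper is the proof of the implication $\Leftarrow$ of Montalb\'{a}n's Theorem (Theorem \ref{Montalban}), which the authors describe as being ``as for Scott's Isomorphism Theorem'': there, one is \emph{handed} formulas $\varphi_{\bar{a}}$ defining the orbits, and the Scott sentence is the conjunction of sentences $\rho_{\bar{a}}$ asserting the two extension properties. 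Your $\sigma$ has exactly that shape, with $\varphi^\alpha_{\bar{a}}$ playing the role of the orbit-defining formulas; what your argument supplies, and what the paper's sketch takes as a hypothesis, is the construction of such formulas via rank stabilization. Two points you should make explicit to close the argument: (i) the bridging lemma that $\mathcal{A}\models\varphi^\alpha_{\bar{a}}(\bar{b})$ if and only if $\varphi^\alpha_{\bar{a}}$ and $\varphi^\alpha_{\bar{b}}$ coincide, since you define $\sim_\alpha$ syntactically but both the truth of $\sigma$ in $\mathcal{A}$ and your back-and-forth invariant are stated in terms of satisfaction; and (ii) the monotonicity lemma that $\varphi^{\beta}_{\bar{a}}$ implies $\varphi^{\gamma}_{\bar{a}}$ for $\gamma<\beta$, which you flag yourself and which is needed both for the refinement claim and for concluding at the end that matched tuples have the same atomic type. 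Both are routine transfinite inductions, so these are presentational gaps rather than mathematical ones.
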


A sentence with the property above is called a \emph{Scott sentence} for $\mathcal{A}$.  The complexity of an optimal Scott sentence for a structure $\mathcal{A}$ measures the internal complexity of $\mathcal{A}$.   

\bigskip

For a countable language $L$, let $C$ be a countably infinite set of new constants.  Identifying the constants with natural numbers, we suppose that $C = \omega$.  Let $Mod(L)$ be the class of $L$-structures with universe $\omega$, and for a computable infinitary sentence $\psi$ in the language $L \cup C$, let $Mod(\psi)$ be the class of $L$-structures with universe $\omega$ that satisfy $\psi$.  There is a natural topology on $Mod(L)$, generated by basic open (actually clopen) neighborhoods of the form $Mod(\varphi)$, for $\varphi$ a finitary quantifier-free sentence in the language $L\cup C$.  We define the Borel hierarchy of classes $K\subseteq Mod(L)$ as follows.

\begin{enumerate}

\item  $K$ is $\mathbf{\Sigma_0}$ and $\mathbf{\Pi_0}$ if it is a basic clopen neighborhood.

\item  For $0 < \alpha < \omega_1$, 

\begin{enumerate}

\item  $K$ is $\mathbf{\Sigma_\alpha}$ if it is a countable union of sets each of which is $\mathbf{\Pi_\beta}$ for some $\beta < \alpha$.

\item  $K$ is $\mathbf{\Pi_\alpha}$ if it is a countable intersection of sets each of which is $\mathbf{\Sigma_\beta}$ for some $\beta<\alpha$.

\item $K$ is $\mathbf{d}$-$\mathbf{\Sigma_{\alpha}}$ if it is a difference of $\Sigma_{\alpha}$ sets.

\end{enumerate}

\end{enumerate}

Vaught \cite{Vaught} proved the following.

\begin{thm} [Vaught]
\label{Vaught}

For a set $K\subseteq Mod(L)$, closed under isomorphism, $K$ is $\mathbf{\Pi_\alpha}$ in the Borel hierarchy iff there is a 
$\Pi_\alpha$ sentence $\varphi$ of $L_{\omega_1\omega}$ such that $K = Mod(\varphi)$.  

\end{thm}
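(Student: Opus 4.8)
The plan is to prove the two implications separately, with the direction ``syntax $\Rightarrow$ Borel'' being routine and the converse carrying all the weight. Throughout I identify an isomorphism of structures on $\omega$ with the action of the group $G = S_\infty$ of permutations of $\omega$: a permutation $g$ acts on $Mod(L)$ by transporting the interpretations of the symbols, and since $C = \omega$ the same $g$ permutes the constants, so $x$ and $g \cdot x$ are isomorphic. Thus \emph{closed under isomorphism} means \emph{$G$-invariant}, and the basic clopen sets $Mod(\theta)$ (for $\theta$ quantifier-free in $L \cup C$) are permuted by $G$; the pointwise stabilizer $\mathrm{Stab}(\bar a) = \{g \in G : g(a_i) = a_i \text{ for all } i\}$ of a finite tuple $\bar a$ is a basic open subgroup.

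For the direction ``$\varphi$ a $\Pi_\alpha$ sentence $\Rightarrow K = Mod(\varphi)$ is $\mathbf{\Pi_\alpha}$'' I would induct on the complexity of $\Pi_\alpha$/$\Sigma_\alpha$ formulas of $L \cup C$ (allowing constants in place of free variables), proving simultaneously that $Mod(\psi)$ is $\mathbf{\Sigma_\alpha}$ for $\psi$ that is $\Sigma_\alpha$ and $\mathbf{\Pi_\alpha}$ for $\psi$ that is $\Pi_\alpha$. The base case is the definition of the basic clopen sets. In the induction a countable disjunction becomes a countable union and a countable conjunction a countable intersection, while an existential quantifier $(\exists \bar u)\psi$ becomes the countable union $\bigcup_{\bar c \in C^{<\omega}} Mod(\psi(\bar c))$ (and dually $\forall$ a countable intersection), since the constants name every element. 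Matching the clauses of the two hierarchies gives the claim.

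The substance is the converse. The naive induction fails at once: a $\mathbf{\Pi_\alpha}$ set is a countable intersection of sets that are $\mathbf{\Sigma_\beta}$ for various $\beta < \alpha$ but that need not individually be $G$-invariant, so there is nothing to which to apply the inductive hypothesis. The fix is to strengthen the statement so as to carry a finite tuple of parameters: I will prove by induction on $\alpha$ that every $\mathbf{\Sigma_\alpha}$ (resp.\ $\mathbf{\Pi_\alpha}$) set $B$ that is merely $\mathrm{Stab}(\bar a)$-invariant equals $Mod(\varphi(\bar a))$ for some $\Sigma_\alpha$ (resp.\ $\Pi_\alpha$) formula $\varphi(\bar x)$; the theorem is the case $\bar a = \emptyset$. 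To manufacture invariant pieces from non-invariant ones I would use the Vaught transforms: for open $U \subseteq G$, set $B^{*U} = \{x : g \cdot x \in B \text{ for comeagerly many } g \in U\}$ and $B^{\triangle U} = \{x : g\cdot x \in B \text{ for non-meagerly many } g \in U\}$. The two facts I need are (i) that $*$ preserves $\mathbf{\Pi_\beta}$ and commutes with countable intersections while $\triangle$ preserves $\mathbf{\Sigma_\beta}$ and commutes with countable unions, and (ii) an identity of the form $B^{\triangle\,\mathrm{Stab}(\bar a)} = \bigcup_{\bar c} B^{*\,\mathrm{Stab}(\bar a\bar c)}$ (and dually with $*$, $\bigcap$, and $\triangle$), a countable union over finite tuples $\bar c$ of constants.

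Granting these, the $\mathbf{\Sigma_\alpha}$ step runs as follows. Write $B = \bigcup_n B_n$ with $B_n \in \mathbf{\Pi_{\beta_n}}$, $\beta_n < \alpha$. Since $B$ is $\mathrm{Stab}(\bar a)$-invariant it equals $B^{\triangle\,\mathrm{Stab}(\bar a)} = \bigcup_n B_n^{\triangle\,\mathrm{Stab}(\bar a)}$ by (i), and by (ii) each $B_n^{\triangle\,\mathrm{Stab}(\bar a)} = \bigcup_{\bar c} B_n^{*\,\mathrm{Stab}(\bar a\bar c)}$, where every $B_n^{*\,\mathrm{Stab}(\bar a\bar c)}$ is $\mathrm{Stab}(\bar a\bar c)$-invariant and, by (i), still $\mathbf{\Pi_{\beta_n}}$. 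The inductive hypothesis at level $\beta_n$ with the longer tuple $\bar a\bar c$ furnishes a $\Pi_{\beta_n}$ formula with $B_n^{*\,\mathrm{Stab}(\bar a\bar c)} = Mod(\psi_{\bar c}(\bar a,\bar c))$; grouping the $\bar c$ by equality pattern and using invariance to see that $\psi_{\bar c}$ depends only on the pattern, the honest union over $\bar c$ becomes a single existential quantifier, so $B_n^{\triangle\,\mathrm{Stab}(\bar a)} = Mod\big((\exists \bar y)\,\psi_n(\bar a,\bar y)\big)$ with $\psi_n$ of class $\Pi_{\beta_n}$. The disjunction over $n$ of the formulas $(\exists\bar y)\psi_n$ is then literally a $\Sigma_\alpha$ formula in the sense of the introduction, and it defines $B$; the $\mathbf{\Pi_\alpha}$ step is dual, using $*$, countable intersections, and $\forall$. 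I expect the main obstacle to lie entirely in establishing the Baire-category facts (i) and (ii) about the transforms, together with the bookkeeping in (ii) that assembles the union (intersection) over $\bar c$ into a genuine existential (universal) quantifier; once these are in hand the inductive translation is mechanical.
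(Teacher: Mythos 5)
The paper itself does not prove this theorem---it cites Vaught---so I am comparing your proposal against the classical proof, which is indeed the one you are reconstructing: the easy syntactic direction by induction on formulas, and the converse via Vaught transforms relative to stabilizers, with a parameter-carrying strengthened induction. That architecture is correct, but your key identity (ii), which carries the entire reduction, is false as stated. Take $L$ with one unary predicate $P$, let $B = Mod(P(5))$ and $\bar a = \emptyset$. Then $B^{\triangle G} = Mod((\exists u)P(u))$: if $m \in P^x$, the nonempty open (hence non-meager) set $\{g : g(m) = 5\}$ consists of $g$ with $g \cdot x \models P(5)$. But for any tuple $\bar c$ containing $5$ one computes $B^{*\mathrm{Stab}(\bar c)} = Mod(P(5))$, and for $\bar c$ not containing $5$ one gets $B^{*\mathrm{Stab}(\bar c)} = \{x : \omega \setminus \bar c \subseteq P^x\}$ (the set of $g \in \mathrm{Stab}(\bar c)$ with $g \cdot x \notin B$ is open, and an open set is meager only if empty). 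So a structure $x$ with $P^x = \{3\}$ lies in $B^{\triangle G}$ but in no $B^{*\mathrm{Stab}(\bar c)}$: the union over stabilizer subgroups is genuinely too small. What the Baire property actually gives is comeagerness in some \emph{basic open subset} of $\mathrm{Stab}(\bar a)$, and these are the cosets $[\sigma] = \{g \in \mathrm{Stab}(\bar a) : g(\bar c) = \bar d\}$, not merely the subgroups (the case $\bar d = \bar c$); in the example the relevant set is comeager in $\{g : g(3) = 5\}$ and in no subgroup.

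The proof can be repaired while keeping your inductive statement, but it needs two further observations, and your write-up elides both. First, writing $[\sigma] = h\,\mathrm{Stab}(\bar a \bar c)$ for any $h \in [\sigma]$, a short computation with right translations shows that $B_n^{*[\sigma]}$ is still $\mathrm{Stab}(\bar a\bar c)$-invariant and of the same Borel class, so the inductive hypothesis applies to it and gives $B_n^{*[\sigma]} = Mod(\psi_{n,\sigma}(\bar a,\bar c))$. Second, your claim that ``$\psi_{\bar c}$ depends only on the equality pattern'' is unjustified---the non-uniform inductive hypothesis hands you \emph{some} formula, with no canonicity across different $\sigma$---but it is also unnecessary: form the countable disjunction over all $n,\sigma$ of $(\exists \bar y)(\mbox{``$\bar x\bar y$ distinct''} \wedge \psi_{n,\sigma}(\bar x,\bar y))$, which is honestly $\Sigma_\alpha$; the inclusion of $B$ in the resulting $Mod$ is immediate, and for the converse one uses the $\mathrm{Stab}(\bar a)$-invariance of $B$ itself: if $x \models \psi_{n,\sigma}(\bar a,\bar d)$ with $\bar a\bar d$ injective, move $\bar d$ onto $\bar c$ by some $v \in \mathrm{Stab}(\bar a)$, so that $v \cdot x \in B_n^{*[\sigma]} \subseteq B$, whence $x \in B$. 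Finally, note that your fact (i) fails at the bottom of this paper's hierarchy: for basic clopen $B$, the set $B^{*U}$ is in general properly $\mathbf{\Pi_1}$ (e.g.\ $Mod(P(5))^{*G} = Mod((\forall u)P(u))$), so the level $\alpha = 1$ case must be done directly (an invariant open set is a union of sets $Mod((\exists \bar y)\theta(\bar y))$ with $\theta$ quantifier-free, again using invariance to trade constants for quantifiers); with that base case and the corrected coset identity, your induction goes through.
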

It is easy to see, as a corollary of Vaught's Theorem, that the same holds for $\Sigma_{\alpha}$ and $d$-$\Sigma_{\alpha}$ sets and sentences.

If $L$ is a computable language, then we have also the \emph{effective} Borel hierarchy.  Let $K\subseteq Mod(L)$.  

\begin{enumerate}

\item  $K$ is effective $\Sigma_0$ and effective $\Pi_0$ if it is a basic clopen neighborhood.

\item  For $0 < \alpha < \omega_1^{CK}$, 

\begin{enumerate}

\item  $K$ is effective $\Sigma_\alpha$ if it is a c.e.\ union of sets each of which is effective $\Pi_\beta$ for some $\beta < \alpha$,

\item  $K$ is effective $\Pi_\alpha$ if it is a c.e.\ intersection of sets each of which is effective $\Sigma_\beta$ for some $\beta < \alpha$.

\end{enumerate}

\end{enumerate}

We may effectively identify elements of $Mod(L)$ with elements of $2^\omega$, and then the effective Borel sets are exactly the hyperarithmetical sets of functions in $2^\omega$.  Vanden Boom \cite{VB} proved the effective analogue of Vaught's Theorem.   

\begin{thm} [Vanden Boom]
\label{Vanden Boom}

For a set $K\subseteq Mod(L)$, closed under isomorphism, $K$ is $\Pi_\alpha$ in the effective Borel hierarchy iff there is a computable $\Pi_\alpha$ sentence $\varphi$ of $L_{\omega_1\omega}$ such that $K = Mod(\varphi)$.  

\end{thm}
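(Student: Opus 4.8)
The plan is to prove both directions by induction on $\alpha$, with almost all of the work in the reverse direction, where closure under isomorphism must be exploited. The engine is an auxiliary correspondence, call it Lemma~A, between the effective Borel hierarchy and computable infinitary sentences in the \emph{full} constant language $L\cup C$ (where each $c_n\in C=\omega$ is interpreted by the element $n$): a set $K\subseteq Mod(L)$ is effective $\Pi_\alpha$ (resp.\ effective $\Sigma_\alpha$) via a c.e.\ index if and only if $K=Mod(\psi)$ for a computable $\Pi_\alpha$ (resp.\ computable $\Sigma_\alpha$) sentence $\psi$ of $L\cup C$, uniformly in the index. Granting Lemma~A, the forward direction of the theorem is immediate: a computable $\Pi_\alpha$ sentence $\varphi$ of $L$ is in particular a computable $\Pi_\alpha$ sentence of $L\cup C$, so $Mod(\varphi)$ is effective $\Pi_\alpha$.

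Lemma~A itself I would prove by a single transfinite induction on $\alpha$ treating $\Sigma$ and $\Pi$ simultaneously, and it requires no isomorphism invariance. At the base level, the basic clopen sets are by definition exactly the $Mod(\theta)$ for finitary quantifier-free $\theta$. For the inductive step the semantic clauses match the syntactic ones line for line: a universal quantifier has $Mod((\forall\bar u)\psi)=\bigcap_{\bar a\in\omega^{<\omega}}Mod(\psi(\bar a))$, a c.e.\ intersection of effective $\Sigma_{<\alpha}$ sets since $\omega$ is computable and substitution of constants for variables is effective; dually for $(\exists\bar u)\psi$; and a c.e.\ conjunction (disjunction) of sentences corresponds to a c.e.\ intersection (union) of the associated sets. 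For the reverse inclusion of Lemma~A one reads the formula off the Borel description, placing each effective $\Sigma_\beta$ or $\Pi_\beta$ piece, $\beta<\alpha$, under an empty tuple of quantifiers; the c.e.\ index for the union/intersection becomes the c.e.\ index for the disjunction/conjunction, so the whole construction is uniform.

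The reverse direction of the theorem is the substance. Given that $K$ is effective $\Pi_\alpha$ and closed under isomorphism, Lemma~A yields a computable $\Pi_\alpha$ sentence $\psi$ of $L\cup C$ with $Mod(\psi)=K$, and one must eliminate the constants to obtain a sentence of $L$ alone. Here I would use the logical action of $S_\infty$ on $Mod(L)$ together with a category-forcing (Vaught transform) argument. For a finite partial injection $p$ of $\omega$, regarded as naming the free variables $\bar x$ by its values, and each effective $\Sigma_\beta$ or $\Pi_\beta$ set $A$ arising in the construction, I would define by induction on $\beta$ a computable $\Sigma_\beta$ (resp.\ $\Pi_\beta$) $L$-formula $\varphi_A^{p}(\bar x)$ forcing ``$\mathcal{A}\in A$'' along $p$. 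The forcing clauses translate directly into logic: a union $\bigcup_i A_i$ is forced by some finite extension of $p$ realizing some $A_i$, giving an existential quantifier over new elements together with a c.e.\ disjunction (a $\Sigma$-bump); an intersection is forced by the dual comeager condition, giving a universal quantifier and a c.e.\ conjunction (a $\Pi$-bump). At the base, the forcing of a basic clopen set $Mod(\theta)$ by $p$ is decided by a finitary quantifier-free $L$-formula in $\bar x$ obtained by substituting the variables for the constants of $\theta$, and this step is exactly where closure under isomorphism is used, to pass from the concrete labelling of the universe to an invariant condition. The desired sentence is then $\varphi_K^{\emptyset}$, and since every quantifier ranges over the computable set $\omega$ and every disjunction and conjunction is indexed by the given c.e.\ data, the result is computable $\Pi_\alpha$, uniformly in the index.

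The main obstacle, as in the classical proof of Vaught's Theorem (Theorem~\ref{Vaught}), is the treatment of the dual clauses. The naive forcing condition for an intersection or a complement is a ``$\forall q\,\exists r$'' (weak-forcing) statement, which threatens both to overshoot the level $\Pi_\alpha$ and to mismatch the $\Sigma$/$\Pi$ alternation. Resolving this requires the homogeneity of the action of $S_\infty$ and the interplay of the comeager (``$*$'') and nonmeager (``$\Delta$'') Vaught transforms, so that the double quantifier collapses to a single one at the correct parity; one must simultaneously verify that the bookkeeping stays c.e., that is, that a c.e.\ index for the Borel set is converted by one computable procedure into a c.e.\ index for the conjuncts and disjuncts of the formula. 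A secondary subtlety is that $\psi$ may mention infinitely many constants, so the elimination cannot proceed by naively quantifying out a fixed finite block; the forcing construction handles this automatically by quantifying over finite extensions of $p$. Once these points are secured, combining them with Lemma~A gives the theorem.
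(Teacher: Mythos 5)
A preliminary remark on the comparison itself: the paper contains no proof of this statement. It is imported as a known theorem of Vanden Boom \cite{VB}, just as Vaught's Theorem (Theorem \ref{Vaught}) is imported, so your proposal has to be judged as a free-standing reconstruction. Its architecture is the right one: the constants-language correspondence (your Lemma A, which is routine and needs no invariance) gives the forward direction, and the hard direction is an effectivization of Vaught's transform/forcing argument over the logic action of $S_\infty$. You have also correctly located where the difficulty sits.

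The problem is that the difficulty is announced rather than resolved, and it is the entire content of the theorem. Concretely, for a $\Pi$ set $B=\bigcap_i B_i$ with each $B_i=\bigcup_j B_{ij}$, the clause you call weak forcing unwinds to $B^{*p}=\bigcap_i\bigcap_{q\supseteq p}\bigcup_j\bigcup_{r\supseteq q}B_{ij}^{*r}$, and one must show two things: first, that the inner ``$\exists r$'' merges into the $\Sigma$-formula one level down (the block $\bigvee_j(\exists\,\text{new variables})\,[\,\Pi_{<}\text{-formula for }B_{ij}^{*r}\,]$ is computable $\Sigma_{<\alpha}$, so the whole expression stays computable $\Pi_\alpha$ rather than climbing a level); and second, that one fixed computable procedure converts the given c.e.\ indices into c.e.\ indices for all these conjunctions and disjunctions along the transfinite induction, which requires effective transfinite recursion (an appeal to the recursion theorem on ordinal notations) that your sketch never mentions. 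Saying ``resolving this requires the homogeneity of the action and the interplay of the $*$ and $\Delta$ transforms'' names the tools but does none of this work; as written the proposal is a plan, not a proof. Separately, you misplace the use of the hypothesis: closure under isomorphism is \emph{not} what decides the base case of the forcing induction --- deciding a clopen set $Mod(\theta)$ from a finite partial injection $p$ is purely combinatorial, and your own Lemma A axiomatizes arbitrary, non-invariant, effective Borel sets over $L\cup C$. Invariance is used exactly once, at the top of the argument: it is what gives $K=K^{*\emptyset}$, i.e.\ that $\mathcal{A}\in K$ iff the empty condition forces $\mathcal{A}\in K$, so that $Mod(\varphi_K^{\emptyset})$ is $K$ itself and not merely some invariant set sandwiched around it. Since your sketch never states this step, even granting the whole forcing calculus it does not close the loop between $K$ and $Mod(\varphi_K^{\emptyset})$.
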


Montalb\'{a}n \cite{Montalban} proved that for a countable ordinal $\alpha\geq 1$, a countable structure $\mathcal{A}$ has a $\Pi_{\alpha+1}$ Scott sentence iff the orbits of all tuples are defined by $\Sigma_\alpha$ formulas.  The implication $\Leftarrow$ is as in the proof of Scott's Theorem.  For the implication $\Rightarrow$, Montalb\'{a}n's proof was clever.  We shall use the ideas from his proof to obtain further results.      

\bigskip

In Section 2, we show that for a countable ordinal $\alpha\geq 2$, if $\mathcal{A}$ has a $\Pi_\alpha$ Scott sentence, then the orbits of all tuples in $\mathcal{A}$ are defined by $\Sigma_{<\alpha}$ formulas.  (Montalb\'{a}n's Theorem gives this in the case where $\alpha$ is a successor ordinal.)
For limit $\alpha$, the implication $\Leftarrow$ fails.  There are familiar structures $\mathcal{A}$ such that the orbits of all tuples are defined by $\Sigma_{<\omega}$ formulas but there is no $\Pi_\omega$ Scott sentence.  
Next, we give an effective version of Montalb\'{a}n's theorem, saying that for a computable ordinal $\alpha\geq 2$, if $\mathcal{A}$ has a computable $\Pi_\alpha$ Scott sentence, then the orbits of all tuples are defined by computable $\Sigma_{<\alpha}$ formulas.  Even for successor ordinals $\alpha$, the implication $\Leftarrow$ fails.    
We construct an example of a computable structure such that the orbits of all tuples are defined by finitary quantifier-free formulas, but there is no computable $\Pi_2$ Scott sentence.
    
In Section 3, we consider further results that can be proved using ideas from Section 2.  A.\ Miller showed that if $\mathcal{A}$ has a $\Pi_\alpha$ Scott sentence and a $\Sigma_\alpha$ Scott sentence, then it has a Scott sentence that is $d$-$\Sigma_{<\alpha}$.  The proof was based on a result of D.\ Miller \cite{DMiller} on separators for disjoint sets axiomatized by $\Pi_\alpha$ sentences.  
Our Theorem \ref{effectiveAMiller} is an effective version of the result of A.~Miller, saying that if $\mathcal{A}$ has a computable $\Pi_\alpha$ Scott sentence and a computable $\Sigma_\alpha$ Scott sentence, then it has a Scott sentence that is computable $d$-$\Sigma_{<\alpha}$. In \cite{DMiller}, D.\ Miller gave an effective version of his result, which, unfortunately, was not sufficient to prove Theorem \ref{effectiveAMiller}.  We give a direct proof of Theorem \ref{effectiveAMiller}.  Then, in Theorem \ref{effectiveDMiller}, we give an effective version of the result of D.\ Miller that would have served to prove Theorem \ref{effectiveAMiller}.

In Section 4, we consider finitely generated groups.  For such a group, there is always a $\Sigma_3$ Scott sentence, and if the group is computable, then there is a computable $\Sigma_3$ Scott sentence (see \cite{KS}).  Often, however, there is a simpler Scott sentence.  In fact, the second author had conjectured that every finitely generated group has a $d$-$\Sigma_2$ Scott sentence, and every computable finitely generated group has a computable $d$-$\Sigma_2$ Scott sentence.  Recently, Harrison-Trainor and Ho \cite{HH} gave an example disproving both conjectures.  We show that if $G$ is a finitely generated group, then $G$ has a $d$-$\Sigma_2$ Scott sentence iff there is a generating tuple whose orbit is defined by a $\Pi_1$ formula, and if $G$ is a computable finitely generated group, then $G$ has a computable $d$-$\Sigma_2$ Scott sentence iff there is a generating tuple whose orbit is defined by a computable $\Pi_1$-formula. 

Recall that the definition above of $Mod(L)$, all $L$-structures have universe $\omega$.  Throughout this paper, all structures given are assumed to be countably infinite with universe $\omega$, and all structures we build are guaranteed by our techniques to have universe $\omega$.

\section{Varying Montalb\'{a}n's Theorem}

Montalb\'{a}n \cite{Montalban} proved the following.

\begin{thm} [Montalb\'{a}n]
\label{Montalban}

Suppose $\alpha\geq 1$ is a countable ordinal, and let $\mathcal{A}$ be a countable structure for a countable language $L$.  Then $\mathcal{A}$ has a $\Pi_{\alpha+1}$ Scott sentence iff the automorphism orbit of each tuple is defined by a $\Sigma_\alpha$ formula.

\end{thm}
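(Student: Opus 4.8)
The plan is to prove the two implications separately, using the standard correspondence between the back-and-forth relations $\le_\beta$ on tuples and infinitary definability. Recall that for tuples $\bar a\in\mathcal A$ and $\bar b\in\mathcal B$ one writes $(\mathcal A,\bar a)\le_\beta(\mathcal B,\bar b)$ when every $\Pi_\beta$ formula true of $\bar a$ holds of $\bar b$, and that this is equivalent to the usual inductively defined Ehrenfeucht--Fra\"{\i}ss\'{e} condition (see \cite{AK}). I would take as known the resulting dictionary: $\Sigma_\alpha$ and $\Pi_\alpha$ truth is exactly what $\le_\alpha$ tracks, universal quantification sends $\Pi_{\alpha+1}$ to $\Pi_{\alpha+1}$, and the classes are closed under the Boolean operations indicated in the definitions above.

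For the (easy) direction, suppose each orbit is defined by a $\Sigma_\alpha$ formula $\varphi_{\bar a}(\bar x)$; I may assume $\varphi_{\bar a}$ entails the finitary atomic type of $\bar a$, since that type can be conjoined without raising complexity. I would take the Scott sentence to be
\[
\sigma \;=\; \varphi_{\langle\rangle}\;\wedge\;\bigwedge_{\bar a}\forall\bar x\Big(\varphi_{\bar a}(\bar x)\to\bigwedge_{b}\exists y\,\varphi_{\bar a b}(\bar x,y)\Big)\;\wedge\;\bigwedge_{\bar a}\forall\bar x\,\forall y\Big(\varphi_{\bar a}(\bar x)\to\bigvee_{b}\varphi_{\bar a b}(\bar x,y)\Big),
\]
the conjunctions ranging over all tuples $\bar a$ and elements $b$ of $\mathcal A$. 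The verification that $\sigma$ is $\Pi_{\alpha+1}$ is pure bookkeeping: $\bigwedge_b\exists y\,\varphi_{\bar a b}$ is a countable conjunction of $\Sigma_\alpha$ formulas and hence $\Pi_{\alpha+1}$, $neg(\varphi_{\bar a})$ is $\Pi_\alpha$, $\bigvee_b\varphi_{\bar a b}$ is $\Sigma_\alpha$, and since $\Pi_{\alpha+1}$ absorbs $\Pi_\alpha$ and $\Sigma_\alpha$, is closed under finite disjunction, universal quantification, and countable conjunction, the whole sentence lands in $\Pi_{\alpha+1}$. That $\sigma$ is a Scott sentence is the usual back-and-forth argument: in any countable $\mathcal B\models\sigma$ one builds an isomorphism onto $\mathcal A$ stage by stage, maintaining $\mathcal B\models\varphi_{\bar a}(\bar b)$ for the matched tuples, using the second conjunct for the forth step and the third for the back step, with the base case supplied by $\varphi_{\langle\rangle}$.

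For the (hard) direction I would argue by contraposition: assuming some orbit is not $\Sigma_\alpha$-definable, I would build a countable $\mathcal B\not\cong\mathcal A$ with $(\mathcal A,\langle\rangle)\le_{\alpha+1}(\mathcal B,\langle\rangle)$. Such a $\mathcal B$ satisfies every $\Pi_{\alpha+1}$ sentence true in $\mathcal A$, so any candidate $\Pi_{\alpha+1}$ Scott sentence for $\mathcal A$ would hold in $\mathcal B$ and thus fail to be a Scott sentence; as the candidate is arbitrary, $\mathcal A$ has no $\Pi_{\alpha+1}$ Scott sentence. The first step is to convert non-definability of the orbit of $\bar a$ into a back-and-forth statement: $\le_\alpha$ fails to pin the orbit down, and, crucially, this failure is persistent under taking extensions (no single $\Sigma_\alpha$ formula cuts out the orbit, so at every finite stage there remain tuples that agree with $\bar a$ to level $\alpha$ yet lie outside its orbit, and these can be chosen compatibly). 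This persistence is exactly Montalb\'{a}n's notion of an $\alpha$-\emph{free} tuple. The second step is to run an $(\alpha+1)$-level back-and-forth construction that feeds on these free tuples to build $\mathcal B$: at the top level every tuple of $\mathcal B$ is matched to within level $\alpha$ in $\mathcal A$ (securing $\le_{\alpha+1}$), while the free tuples are used to install in $\mathcal B$ a configuration that no automorphism type of $\mathcal A$ realizes (securing $\mathcal B\not\cong\mathcal A$).

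The main obstacle is precisely this last construction. The two requirements on $\mathcal B$ pull against each other---$(\mathcal A)\le_{\alpha+1}(\mathcal B)$ demands that $\mathcal B$ imitate $\mathcal A$ at every level below $\alpha+1$, while $\mathcal B\not\cong\mathcal A$ demands a genuine difference---and reconciling them requires exploiting non-$\Sigma_\alpha$-definability in the strong, persistent (i.e.\ $\alpha$-free) form, together with a tree/K\"onig-type argument to make the finite-stage choices cohere into a single structure. Extracting the right uniform family of witnesses from mere non-definability, and organizing the back-and-forth so that the level-$\alpha$ agreement is never destroyed while the global difference is preserved, is the clever heart of the argument; the effective refinements in the later sections should amount to carrying out this construction with computable bookkeeping.
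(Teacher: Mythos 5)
Your left-to-right direction is fine and is essentially the paper's: you build Scott's sentence from the orbit-defining formulas, and the complexity bookkeeping is as you say. The problem is the other direction, which is where all the content of the theorem lies.

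For the hard direction you propose contraposition: from a non-$\Sigma_\alpha$-definable orbit, build $\mathcal{B}\not\cong\mathcal{A}$ with $\mathcal{A}\le_{\alpha+1}\mathcal{B}$. But you never carry this out; your last paragraph explicitly labels the construction ``the main obstacle'' and ``the clever heart of the argument'' and leaves it there. Two specific steps are missing, not merely unwritten. First, the passage from ``no $\Sigma_\alpha$ formula defines the orbit of $\bar a$'' to the persistent ($\alpha$-free) form of failure is itself a real argument: non-definability gives you, for each single $\Sigma_\alpha$ formula true of $\bar a$, some witness outside the orbit, but your parenthetical ``and these can be chosen compatibly'' is exactly the uniformity that has to be proved, and nothing in your sketch produces it. Second, the $(\alpha+1)$-level back-and-forth construction of $\mathcal{B}$ that simultaneously secures $\le_{\alpha+1}$ and non-isomorphism is only described by its requirements, which, as you yourself note, pull against each other. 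A proposal whose central step is announced as an open obstacle is a plan, not a proof.

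It is worth seeing how the paper avoids this tension entirely: it argues directly, not by contraposition, via a consistency property (a Makkai--Henkin device). Given the $\Pi_{\alpha+1}$ Scott sentence $\varphi$, take $\mathcal{C}$ to be the finite sets of lower-complexity sentences (with new constants) realizable in $\mathcal{A}$, with an extra clause forcing any resulting Henkin model to satisfy $\varphi$. One then asks whether $\mathcal{C}$ also satisfies a type-omitting clause for the $\Pi$-type of a fixed tuple $\bar a$: if it did, the Henkin construction would yield a model of $\varphi$ omitting that type, contradicting that $\varphi$ pins down the isomorphism type of $\mathcal{A}$ (in which the type is realized). So the clause fails, and the finite set $S$ and constants $\bar c$ witnessing its failure give a single finite conjunction $\chi$ such that $(\exists\bar x)\chi(\bar u,\bar x)$ is $\Sigma_\alpha$, is true of $\bar a$, and generates the complete $\Pi_\alpha$-type of $\bar a$; a short back-and-forth lemma then shows that any such type-generating $\Sigma_\alpha$ formula defines the orbit. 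The point is that the Scott sentence hypothesis is used to make the ``hard construction'' unnecessary: one never has to build a non-isomorphic $\mathcal{B}$ preserving $\le_{\alpha+1}$, only to observe that a model of $\varphi$ omitting a realized type cannot exist. If you want to salvage your outline, that is the pivot to make; otherwise you would have to reconstruct Montalb\'{a}n's original free-tuple machinery in full, which is precisely the part you have left blank.
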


For the implication $\Leftarrow$, the proof is as for Scott's Isomorphism Theorem.  

\begin{proof} [Proof of $\Leftarrow$]

For each $\bar{a}$, let $\varphi_{\bar{a}}(\bar{x})$ be a $\Sigma_{\alpha}$ formula that defines the orbit of $\bar{a}$.  For each $\bar{a}$, we determine a sentence $\rho_{\bar{a}}$ as follows.   

\begin{itemize}

\item  $\rho_\emptyset$: $\bigwedge_b (\exists x)\varphi_b(x)\ \&\ (\forall x)\bigvee_b\varphi_b(x)$

\item  $\rho_{\bar{a}}$:  $(\forall\bar{u})[\varphi_{\bar{a}}(\bar{u})\rightarrow
(\bigwedge_b (\exists x)\varphi_{\bar{a},b}(\bar{u},x)\ \&\ (\forall x)\bigvee_b\varphi_{\bar{a},b}(x))]$
\end{itemize}
Our Scott sentence is the conjunction of the sentences $\rho_{\bar{a}}$.  It is not difficult to see that this is $\Pi_{\alpha+1}$.
\end{proof}

The implication $\Rightarrow$ in Montalb\'{a}n's result also holds for limit ordinals, with no change in the proof.  Here is the statement.      

\begin{thm}
\label{MontalbanVariant}

Let $\mathcal{A}$ be a countable structure for a countable language $L$.  Let $\alpha\geq 2$.  If $\mathcal{A}$ has a $\Pi_\alpha$ Scott sentence, then the orbit of each tuple is defined by a $\Sigma_{<\alpha}$ formula.

\end{thm}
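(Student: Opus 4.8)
The plan is to run Montalb\'{a}n's proof of the implication $\Rightarrow$ in Theorem~\ref{Montalban}, observing that it nowhere uses that $\alpha$ is a successor; only inequalities of the form $\beta<\alpha$ enter. The tools are the back-and-forth relations: write $(\mathcal A,\bar a)\le_\beta(\mathcal B,\bar b)$ to mean that every $\Pi_\beta$ formula true of $\bar a$ in $\mathcal A$ holds of $\bar b$ in $\mathcal B$, with the usual recursive characterization in terms of matching extensions at lower levels (see \cite{AK}). I would use two standard facts. First, for each $\bar a$ and each $\beta$ the class $\{\bar b:(\mathcal A,\bar a)\le_\beta(\mathcal A,\bar b)\}$ is defined in $\mathcal A$ by a canonical $\Pi_\beta$ formula; hence, wherever this $\le_\beta$-ball coincides with the automorphism orbit of $\bar a$, the orbit is $\Pi_\beta$-definable. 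Second, $\mathcal A\le_\alpha\mathcal B$ implies that every $\Pi_\alpha$ sentence true in $\mathcal A$ holds in $\mathcal B$; so if $\varphi$ is a $\Pi_\alpha$ Scott sentence for $\mathcal A$, then any countable $\mathcal B$ with $\mathcal A\le_\alpha\mathcal B$ models $\varphi$ and is therefore a copy of $\mathcal A$.

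The one elementary maneuver I would isolate is an existential-quantifier trick. Fix the tuple $\bar a$, and suppose we can produce a finite extension $\bar a\bar c$ together with an ordinal $\beta$ satisfying $\beta+1<\alpha$ such that the orbit of $\bar a\bar c$ is defined by a $\Pi_\beta$ formula $\pi(\bar x,\bar y)$. Then $(\exists\bar y)\,\pi(\bar x,\bar y)$ defines the orbit of $\bar a$: if $\mathcal A\models(\exists\bar y)\pi(\bar b,\bar y)$, say via $\bar d$, then $\bar b\bar d$ lies in the orbit of $\bar a\bar c$, and the automorphism carrying $\bar a\bar c$ to $\bar b\bar d$ sends $\bar a$ to $\bar b$; conversely, if $f$ is an automorphism with $f(\bar a)=\bar b$ then $\pi(\bar b,f(\bar c))$ holds. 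Since $(\exists\bar y)\pi$ is $\Sigma_{\beta+1}$ and $\beta+1<\alpha$, this displays the orbit of $\bar a$ as $\Sigma_{<\alpha}$. For limit $\alpha$ the side condition $\beta+1<\alpha$ is automatic from $\beta<\alpha$, which is precisely why the limit case requires no separate treatment.

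Everything thus reduces to the crux: from a $\Pi_\alpha$ Scott sentence I must extract, for each $\bar a$, such an extension $\bar a\bar c$ whose orbit is already pinned down strictly below $\alpha$ --- equivalently, a finite extension on which some $\le_\beta$-ball with $\beta+1<\alpha$ has collapsed to the orbit. I would argue by contradiction. If no finite extension of $\bar a$ becomes determined at any such level, then there is enough slack at every relevant level below $\alpha$ to run a back-and-forth (omitting-types) construction building a countable $\mathcal B$ together with a tuple $\bar b$ that matches $\bar a$ at all levels below $\alpha$, while maintaining $\mathcal A\le_\alpha\mathcal B$ at every stage and diagonalizing so that $\mathcal B\not\cong\mathcal A$. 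By the second fact of the first paragraph, $\mathcal B$ then satisfies the $\Pi_\alpha$ Scott sentence, contradicting that its only countable models are copies of $\mathcal A$. This counter-model construction is Montalb\'{a}n's clever step, and it is where essentially all the difficulty lies: one must simultaneously preserve the matchings witnessing $\le_\beta$ for the relevant $\beta<\alpha$ and defeat every candidate isomorphism with $\mathcal A$.

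I would close by remarking that no part of this argument inspects the ordinal structure of $\alpha$ beyond the inequality $\beta<\alpha$ (and, through the existential trick, $\beta+1<\alpha$). For successor $\alpha=\gamma+1$ the output is the classical $\Sigma_\gamma$ bound of Theorem~\ref{Montalban}; for limit $\alpha$ the same argument yields the orbit as $\Sigma_{<\alpha}$ with no modification. The main obstacle is the crux of the third paragraph --- converting the bare $\Pi_\alpha$-complexity of the Scott sentence into a local drop of Scott rank after a finite extension --- together with the back-and-forth counter-model that establishes it.
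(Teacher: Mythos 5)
Your second paragraph (the existential-quantifier trick) is correct, and your overall strategy --- derive a contradiction by building a countable model of the $\Pi_\alpha$ Scott sentence that is not a copy of $\mathcal{A}$ --- is indeed the shape of the paper's argument. But the claim you reduce everything to in your third paragraph is a genuine gap, in two respects. First, your ``crux'' is strictly stronger than the theorem being proved: you need, for every $\bar a$, a finite extension $\bar a\bar c$ whose orbit is defined by a $\Pi_\beta$ formula with $\beta+1<\alpha$, whereas the theorem (and the paper's proof) only produces, for each tuple, a $\Sigma_{<\alpha}$ definition of that tuple's own orbit, of the form $(\exists\bar x)\chi(\bar u,\bar x)$. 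You cannot recover your crux from this: the matrix $\chi$ need not define the orbit of the extended tuple $\bar a\bar e$ (where $\bar e$ witnesses the quantifier), because two witnesses for the same existential definition need not be conjugate over $\bar a$. A trivial illustration: in $(\mathbb{Q},<)$ the orbit of $a$ is defined by $(\exists u)(u\ne x)$, yet the witnesses $d<a$ and $e>a$ give non-automorphic extensions $(a,d)$ and $(a,e)$, so the matrix does not define the orbit of $(a,d)$. Thus orbit-pinning after a finite extension is an extra demand that neither Montalb\'{a}n's proof nor the paper's establishes, and it is not clear it is even true; already for $\alpha=2$ it asserts that in any structure with a $\Pi_2$ Scott sentence, every tuple extends to one whose orbit is defined by a finitary quantifier-free formula.

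Second, what you offer as a proof of the crux is only a gesture. You say that if no finite extension of $\bar a$ collapses, there is ``enough slack'' to build $\mathcal{B}\models\varphi$ with $\mathcal{B}\not\cong\mathcal{A}$, but you never specify the requirements of the construction, what type is omitted, or how your hypothesis is used to meet the requirements. Compare with what the paper actually needs: its ``slack'' is Condition (8) on the consistency property --- at every finite stage, every candidate tuple of constants $\bar c$ can be made to violate a conjunct of some $\Pi_{<\alpha}$ formula true of $\bar a$, while the finite set of sentences stays satisfiable in $\mathcal{A}$ --- and non-isomorphism of the resulting model comes precisely from omitting the $\Pi_{<\alpha}$-type of $\bar a$. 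The failure of Condition (8) is what hands you a formula, and note what it hands you: a $\Sigma_{<\alpha}$ formula that merely \emph{generates} the $\Pi_{<\alpha}$-type of $\bar a$, not one that defines any orbit. Converting type-generation into orbit-definability is a separate step, the back-and-forth argument of Lemma \ref{orbits}, which uses the formulas $\varphi_{\bar a}$ for \emph{all} tuples simultaneously. Your plan has no counterpart of either step: the hypothesis you negate (about orbits of finite extensions) does not match the extension property an omitting-types construction requires, and the concluding back-and-forth lemma is absent because your crux assumes orbit-definability outright. To repair the argument, weaken the crux to ``some $\Sigma_{<\alpha}$ formula true of $\bar a$ generates its $\Pi_{<\alpha}$-type,'' prove that by the consistency-property construction, and then prove the back-and-forth lemma.
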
 

In our account of Montalb\'{a}n's proof, we use a ``consistency property.''  This is a family of sets of sentences arising in a kind of Henkin construction, developed by Makkai for producing models of $L_{\omega_1\omega}$ sentences.  See \cite{Keisler} for a discussion of consistency properties.  The definition that we give below is not standard, but it suits our needs.      

\begin{defn}

Let $L$ be a countable language, and let $C$ be a countably infinite set of new constants.  A \emph{consistency property} is a non-empty set $\mathcal{C}$ of finite sets $S$ of sentences, each obtained by substituting constants from $C$ for the free variables in an $L_{\omega_1\omega}$ formula in normal form, such that the following conditions hold:

\begin{enumerate}

\item  for $S\in\mathcal{C}$, if $\varphi\in S$, where $\varphi = \bigwedge_i (\forall\bar{u}_i)\varphi_i(\bar{u}_i)$, then for each $i$ and each appropriate tuple of constants $\bar{c}$, there exists $S'\supseteq S$ in $\mathcal{C}$ with $\varphi_i(\bar{c})\in S'$,

\item  for $S\in\mathcal{C}$,
if $\varphi\in S$, where $\varphi = \bigvee_i (\exists\bar{u}_i)\varphi_i(\bar{u}_i)$, then for some $i$ and $\bar{c}$, there exists $S'\supseteq S$ in $\mathcal{C}$ with $\varphi_i(\bar{c})\in S'$, 

\item  for $S\in\mathcal{C}$, for each finitary quantifier-free $L$-formula $\varphi(\bar{x})$ and appropriate tuple $\bar{c}$, there exists $S'\supseteq S$ in $\mathcal{C}$ with $\pm\varphi(\bar{c})\in S'$,

\item  for $S\in\mathcal{C}$, if $F$ is an $n$-place function symbol, and $c_1,\ldots,c_n\in C$, there is a constant $d\in C$ such that for some $S'\supseteq S$ in $\mathcal{C}$, the sentence $F(c_1,\ldots,c_n) = d$ is in $S'$, 

\item  for $S\in\mathcal{C}$, and distinct $c,c'\in C$, the sentence $c = c'$ is not in $S$, 

\item  for $S\in\mathcal{C}$, the set of finitary quantifier-free sentences in $S$ is consistent.

\end{enumerate}

\end{defn}

\begin{lem}

If $\mathcal{C}$ is a consistency property, then we can form a countable chain $(S_n)_{n\in\omega}$ of elements of $\mathcal{C}$ such that the set $\{S_n:n\in\omega\}$ is also a consistency property.  

\end{lem}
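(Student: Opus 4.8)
The plan is to build the chain $(S_n)$ by a straightforward bookkeeping argument, enumerating all the "obligations" imposed by conditions (1)--(6) and discharging them one at a time while staying inside $\mathcal{C}$. First I would fix an enumeration of all the tasks that must eventually be handled: for each possible set $S$ that might appear in the chain, each conjunction $\varphi = \bigwedge_i(\forall\bar u_i)\varphi_i(\bar u_i)$ in $S$ together with each index $i$ and each appropriate tuple of constants $\bar c$ (for condition (1)); each disjunction in $S$ (for condition (2)); each finitary quantifier-free $\varphi(\bar c)$ that should be decided (condition (3)); and each functional instance $F(c_1,\dots,c_n)$ to be given a value $d$ (condition (4)). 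Since $C$ is countable, the language is countable, and each $S$ is finite, there are only countably many such obligations, so I can arrange them into a single list $(T_k)_{k\in\omega}$ in which each obligation recurs infinitely often.

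Next I would construct the chain recursively. Start with any $S_0 \in \mathcal{C}$, which is non-empty by hypothesis. Having built $S_n \in \mathcal{C}$, I examine the $n$-th task $T_n$ in my enumeration. If $T_n$ is an obligation that applies to $S_n$ --- for instance, $T_n$ concerns a conjunction $\varphi \in S_n$, an index $i$, and a tuple $\bar c$ --- then the relevant condition (here (1)) guarantees the existence of some $S' \supseteq S_n$ in $\mathcal{C}$ with $\varphi_i(\bar c) \in S'$, and I set $S_{n+1} = S'$. The cases for disjunctions, quantifier-free decisions, and function values are handled identically, using conditions (2), (3), and (4) respectively. If $T_n$ does not apply to the current $S_n$ (for example, it refers to a sentence not in $S_n$), I simply set $S_{n+1} = S_n$. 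In every case $S_{n+1} \in \mathcal{C}$ and $S_n \subseteq S_{n+1}$, so the chain is well-defined and increasing.

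It then remains to verify that $\mathcal{D} := \{S_n : n \in \omega\}$ is itself a consistency property. Conditions (1)--(4) each assert that whenever a triggering sentence lies in some member of $\mathcal{D}$, a suitable extension exists within $\mathcal{D}$; this is exactly what the recurring enumeration secures. If, say, a conjunction $\varphi$ with witness requirement $(\varphi_i,\bar c)$ appears in $S_m$, then because the corresponding task recurs, there is some $n \geq m$ with $T_n$ equal to that task and $\varphi$ still in $S_n$ (as $S_m \subseteq S_n$), so $S_{n+1} \in \mathcal{D}$ satisfies $\varphi_i(\bar c) \in S_{n+1}$ and extends $S_m$; and likewise for (2)--(4). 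Conditions (5) and (6) are preserved automatically: each $S_n$ already lies in $\mathcal{C}$, so $c = c'$ for distinct constants never appears in any $S_n$, and the quantifier-free part of each individual $S_n$ is consistent. The one point requiring a word is that (6) must hold for the union $\bigcup_n S_n$ of quantifier-free sentences, not merely for each $S_n$ separately; but finitary consistency is a finitary (hence compactness-preserved) property, and any finite subset of $\bigcup_n S_n$ lies in a single $S_n$, whose quantifier-free part is consistent --- so the whole union is consistent as well.

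I expect the only genuine subtlety to be the scheduling argument: I must ensure that no obligation is starved, i.e.\ that each task gets attended to at a stage when its triggering sentence is actually present, and that a sentence once placed in the chain is never removed (which holds because the chain is increasing). Standard dovetailing --- listing the countably many obligations so that each appears infinitely often --- handles this cleanly. The verification of (6) for the union, via the finite-character of consistency, is the only place where a small extra observation beyond pure bookkeeping is needed.
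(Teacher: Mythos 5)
Your overall strategy---build the chain by discharging the closure obligations of conditions (1)--(4) one at a time, then check that the resulting countable family still satisfies all six conditions---is exactly the bookkeeping argument that the paper's one-line proof sketch has in mind. But there is a genuine gap in how you set up the bookkeeping: you fix \emph{in advance} a single countable list of all obligations ``for each possible set $S$ that might appear in the chain,'' and you justify countability by noting that $C$ is countable, the language is countable, and each $S$ is finite. That count is wrong for the obligations coming from conditions (1) and (2): those are indexed by the \emph{infinitary} sentences occurring in members of $\mathcal{C}$, and a consistency property may involve uncountably many such sentences (already at level $\Sigma_1$ there are continuum many countable disjunctions of existential finitary formulas). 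This is not a hypothetical worry: in the paper's own application, $\mathcal{C}$ consists of all finite sets of $\Sigma_\beta$ and $\Pi_\beta$ sentences with $\beta+1<\alpha$ realized in $\mathcal{A}$, which is an uncountable family once $\alpha\geq 3$. So the master list $(T_k)_{k\in\omega}$ you want to dovetail against need not exist, and the recursion cannot begin. Your counting argument covers only conditions (3) and (4), whose obligations are indexed by finitary data.

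The repair is standard but it genuinely changes the construction: the enumeration of obligations must be \emph{dynamic} rather than static. At stage $n$ only the finitely many sentences of $S_n$ are present, and each of them generates only countably many obligations (one per pair $(i,\bar{c})$ for a conjunction, one per disjunction, plus the countably many obligations of types (3) and (4)); fix an enumeration $(O_{m,k})_{k\in\omega}$ of the obligations whose triggering sentences lie in $S_m$, as each $S_m$ is created, and use a pairing function so that stage $n=\langle m,k\rangle\geq m$ serves obligation $O_{m,k}$. Since the chain is increasing, the trigger is still present in $S_n$ when its obligation is served, the required extension inside $\mathcal{C}$ exists because $S_n\in\mathcal{C}$, and every obligation ever generated is eventually served; your verification of (1)--(4) for $\{S_n:n\in\omega\}$ then goes through verbatim. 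One further small point: your worry about condition (6) holding for $\bigcup_n S_n$ is not part of what this lemma asserts---the members of $\{S_n:n\in\omega\}$ are the finite sets $S_n$, each of which satisfies (5) and (6) simply because it lies in $\mathcal{C}$. (Your compactness observation about the union is correct, and it is essentially what makes the \emph{following} proposition in the paper work, but it is not needed here.)
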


\begin{proof} [Proof sketch]

This is clear just from the fact that the sets $S$ in $\mathcal{C}$ are finite, and there are only finitely many clauses (in the definition of consistency property) asking for extensions.
\end{proof}

\begin{prop}

Let $(S_n)_{n\in\omega}$ be a countable chain such that $\{S_n:n\in\omega\}$ is a consistency property.  
Then the set of atomic sentences and negations of atomic sentences in $\cup_n S_n$ is the atomic diagram of a well defined structure $\mathcal{B}$, with universe equal to $C$. Moreover, all sentences $\varphi\in \cup_n S_n$ are true of the appropriate elements in $\mathcal{B}$.

\end{prop}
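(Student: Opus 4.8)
The plan is to read the structure $\mathcal{B}$ directly off $D := \bigcup_n S_n$, using its atomic and negated-atomic sentences to interpret the relations and functions, and then to prove by induction on the rank of $\varphi$ in the normal-form hierarchy that every $\varphi \in D$ is true in $\mathcal{B}$. The first thing I would record is that, because $\{S_n : n\in\omega\}$ is itself a consistency property whose members form a chain, every extension $S'\supseteq S_n$ demanded by clauses (1)--(4) is again one of the $S_m$, so the witnessing sentence it contains already lies in $D$. In particular, by clause (3) every atomic $L\cup C$-sentence $\theta$ satisfies exactly one of $\theta\in D$, $neg(\theta)\in D$ — the ``at most one'' coming from clause (6) applied to an $S_n$ containing both — and by clause (4) every term $F(\bar c)$ admits some constant $d$ with $(F(\bar c)=d)\in D$. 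Write $A$ for the set of atomic and negated-atomic sentences of $D$.

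Next I would define $\mathcal{B}$ on universe $C$ by declaring $R^{\mathcal{B}}(\bar c)$ to hold iff $R(\bar c)\in D$, and $F^{\mathcal{B}}(\bar c)=d$ for the constant $d$ with $(F(\bar c)=d)\in D$, each constant naming itself. The one genuine well-definedness issue is single-valuedness of the functions: if $(F(\bar c)=d)$ and $(F(\bar c)=d')$ both lie in $D$ with $d\neq d'$, then since clause (5) forbids $d=d'\in D$, clause (3) forces $neg(d=d')\in D$; but $F(\bar c)=d$, $F(\bar c)=d'$, and $neg(d=d')$ are jointly inconsistent quantifier-free sentences lying in a common $S_n$, contradicting clause (6). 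The same combination of clauses (3), (5), (6) shows that $c=c\in D$ while $c=c'\notin D$ for $c\neq c'$, so the universe is honestly $C$ with no collapse, and that $A$ is precisely the atomic diagram of $\mathcal{B}$ (reducing nested terms to constants via clause (4), with clause (6) ensuring substitution of a function value is respected).

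Finally, for the ``moreover'' clause I would argue by induction on the ordinal rank of $\varphi$ that $\varphi\in D$ implies $\mathcal{B}\models\varphi$. The base case, $\varphi$ atomic or negated atomic, is exactly the statement that $A$ is the atomic diagram. For $\varphi=\bigwedge_i(\forall\bar u_i)\varphi_i(\bar u_i)\in D$, clause (1) places $\varphi_i(\bar c)\in D$ for every $i$ and every tuple $\bar c$ of constants, so the induction hypothesis gives $\mathcal{B}\models\varphi_i(\bar c)$ for all such $\bar c$; since every element of $\mathcal{B}$ is named by a constant, this yields $\mathcal{B}\models(\forall\bar u_i)\varphi_i(\bar u_i)$ for each $i$, hence $\mathcal{B}\models\varphi$. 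Dually, for $\varphi=\bigvee_i(\exists\bar u_i)\varphi_i(\bar u_i)\in D$, clause (2) supplies one $i$ and one tuple $\bar c$ with $\varphi_i(\bar c)\in D$, whence $\mathcal{B}\models(\exists\bar u_i)\varphi_i(\bar u_i)$ and so $\mathcal{B}\models\varphi$. The step carrying the most weight is the universal case, which depends essentially on the universe being exactly $C$ so that quantification over elements coincides with quantification over the constants for which clauses (1) and (2) furnish witnesses; apart from that, the only real subtlety is the function well-definedness resolved above.
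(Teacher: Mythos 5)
Your construction of $\mathcal{B}$ and the overall plan (read the structure off the atomic part of $D=\bigcup_n S_n$, then induct on normal-form rank to show every sentence in $D$ holds in $\mathcal{B}$) is exactly the paper's approach, and your well-definedness argument for the functions via clauses (3), (5), (6), together with clause (4) for totality, is a correct and more detailed version of what the paper only asserts. The key preliminary observation --- that since $\{S_n\}$ is itself a consistency property whose members form a chain, every witness demanded by the clauses already lies in $D$ --- is also right and is what makes the whole argument run.

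There is, however, a genuine gap in the base case of your final induction. You treat only atomic and negated-atomic sentences there, but in the paper's normal form the rank-$0$ ($\Sigma_0=\Pi_0$) sentences are \emph{arbitrary} finitary quantifier-free sentences, and such sentences really do occur in $D$: clause (3) forces, for every finitary quantifier-free $\varphi(\bar{x})$ and every tuple $\bar{c}$, that $\pm\varphi(\bar{c})$ appear in some $S_m$. So a sentence such as $\neg\theta_1\vee\theta_2$ (with $\theta_1,\theta_2$ atomic) can lie in $D$ and is covered by none of your three cases; worse, your inductive cases for $\bigwedge_i(\forall \bar{u}_i)\varphi_i$ and $\bigvee_i(\exists\bar{u}_i)\varphi_i$ apply the induction hypothesis to instances $\varphi_i(\bar{c})$ that may themselves be composite quantifier-free sentences, so the gap infects every level of the induction. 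The paper closes this by a separate induction on finitary quantifier-free formulas showing $\mathcal{B}\models\varphi(\bar{c})$ if and only if $\varphi(\bar{c})\in D$ (preceded by the induction on terms that you only gesture at parenthetically). You can repair your proof with tools you already used: by clause (3) and the chain property, $D$ decides every atomic sentence, and by your atomic-diagram fact these decisions agree with truth in $\mathcal{B}$; if some quantifier-free $\varphi\in D$ were false in $\mathcal{B}$, then $\varphi$ together with the finitely many literals of $D$ deciding its atomic subsentences would form a propositionally inconsistent finite subset of $D$, hence lie in a single $S_k$, contradicting clause (6).
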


\begin{proof}

For an $n$-placed relation symbol $R$, we let $R^\mathcal{B}$ be the set of $(c_1,\ldots,c_n)$ such that $R(c_1,\ldots,c_n)\in \cup_n S_n$.  Conditions (3) and (6) guarantee that $R^\mathcal{B}$ is a well-defined relation on $C^n$.  Suppose $F$ is an $n$-placed function symbol in $L$. Then $F^\mathcal{B}(c_1,\ldots,c_n) = d$ if the sentence $F(c_1,\ldots,c_n) = d$ is in some $S_n$.  Conditions (4), (5), and (6) guarantee that $F^\mathcal{B}$ is well-defined. For $c,d\in C$, the sentence $c = d$ is in $\cup_n S_n$ iff the constants $c$ and $d$ are actually the same.  An easy induction on terms $\tau(\bar{x})$ shows that $\tau^\mathcal{B}(\bar{c}) = d$ iff the sentence $\tau(\bar{c}) = d$ is in $\cup_n S_n$.  Then an easy induction on finitary quantifier-free formulas $\varphi$ shows that $\mathcal{B}\models\varphi(\bar{c})$ iff the sentence $\varphi(\bar{c})$ is in $\cup_n S_n$.
Finally, an easy induction on sentences shows that if $\varphi\in\cup_n S_n$, then $\mathcal{B}\models\varphi$.    
\end{proof}       

We want a consistency property $\mathcal{C}$ that produces models of the $\Pi_\alpha$ Scott sentence 
$\varphi = \bigwedge_i(\forall\bar{u}_i)\varphi_i(\bar{u}_i)$, where $\varphi_i(\bar{u}_i) = \bigvee_j (\exists\bar{v}_{i,j})\psi_{i,j}(\bar{u}_i,\bar{v}_{i,j})$.  We also want to control the complexity of the sentences that appear in $S\in\mathcal{C}$.  Instead of putting $\varphi$ into various sets $S\in \mathcal{C}$, we add to the six conditions in the definition of consistency property a seventh condition guaranteeing that $\varphi$ is witnessed.     
\begin{enumerate}
\setcounter{enumi}{6}

\item  for $S\in\mathcal{C}$, for each $i$ and each appropriate $\bar{c}$, there exist $j$, and an appropriate $\bar{d}$ such that for some $S'\supseteq S$ in $\mathcal{C}$, $\psi_{i,j}(\bar{c},\bar{d})\in S'$.  

\end{enumerate}

If $\mathcal{C}$ is a consistency property satisfying Conditions (1)--(7), then there is a chain $(S_n)_{n\in\omega}$ of elements of $\mathcal{C}$ such that $\{S_n:n\in\omega\}$ also satisfies Conditions (1)--(7).  For any such chain $(S_n)_{n\in\omega}$ the resulting structure is a model of $\varphi$.  Our consistency property $\mathcal{C}$ will be the set of finite sets $S$ of sentences in the language $L\cup C$, each $\Sigma_\beta$ or $\Pi_\beta$ for some $\beta$ such that $\beta+1 <\alpha$ (and each, recall, obtained by substituting constants from $C$ for the free variables in an $L_{\omega_1\omega}$ formula in normal form), where some interpretation of the constants from $C$ appearing in the sentences of $S$, mapping distinct constants to distinct elements of $\mathcal{A}$, makes all of these sentences true.  (Since the set of sentences is finite, there are only finitely many constants from $C$ to assign to elements of $\mathcal{A}$.)  It is easy to see that this satisfies Conditions (1)--(7).  

\bigskip
\noindent
\textbf{Claim}:  For each tuple $\bar{a}$ of distinct elements, our $\mathcal{C}$ fails to satisfy the following further condition.       

\begin{enumerate}
\setcounter{enumi}{7}

\item  for each $S\in\mathcal{C}$, and for each $\bar{c}$, a tuple of distinct constants of the same length as $\bar{a}$, there is some $\Pi_{<\alpha}$ formula $\psi(\bar{x}) = \bigwedge_i (\forall\bar{u}_i)\psi_i(\bar{x},\bar{u}_i)$, true of $\bar{a}$, and some $S'\supseteq S$ in $\mathcal{C}$ such that for some $i$ and some $\bar{d}$, $neg(\psi_i(\bar{c},\bar{d}))\in S'$.  (Note that for some $\beta$ such that $\beta+1 <\alpha$, $\psi_i(\bar{c},\bar{d})$ is $\Sigma_\beta$, and $neg(\psi_i(\bar{c},\bar{d}))$ is $\Pi_\beta$.)  

\end{enumerate}

\begin{proof} [Proof of Claim]

If our $\mathcal{C}$ satisfied Conditions (1)--(8),  then, we would have countable chains $(S_n)_{n\in\omega}$ yielding models of $\varphi$ with no tuple satisfying all of the $\Pi_{<\alpha}$ formulas true of $\bar{a}$.  Since $\varphi$ is a Scott sentence for $\mathcal{A}$, this is impossible.  
\end{proof}

By the Claim, there must be some set of sentences $S\in\mathcal{C}$ and some tuple of distinct constants $\bar{c}$ from $C$, of the same length as $\bar{a}$, that witness the failure of Condition (8).  Let $\bar{c}'$ be the tuple of all constants from $C$, other than $\bar{c}$, appearing in $S$, and let $\chi(\bar{c},\bar{c}')$ be the finite conjunction of the sentences in $S$ and sentences expresssing that the elements of $\bar{c}, \bar{c}'$ are pairwise distinct.  Then $\mathcal{A}$ satisfies the $\Sigma_{<\alpha}$ sentence $(\exists\bar{x})\chi(\bar{a},\bar{x})$; and for all $\Pi_{<\alpha}$ formulas $\psi$ true of $\bar{a}$, $\mathcal{A}$ satisfies the $\Pi_{<\alpha}$ sentences logically equivalent to $(\forall \bar{u})[(\exists\bar{x})\chi(\bar{u},\bar{x})\rightarrow\psi(\bar{u})]$.  Thus, we have a $\Sigma_{<\alpha}$ formula $(\exists\bar{x})\chi(\bar{u},\bar{x})$ that generates the complete $\Pi_{<\alpha}$ type of $\bar{a}$.  
For each $\bar{a}$, let $\varphi_{\bar{a}}(\bar{u})$ be a $\Sigma_{<\alpha}$ formula that generates the complete $\Pi_{<\alpha}$ type of $\bar{a}$.  We claim that these formulas define the orbits.  To show this, it is enough to prove the following lemma.   

\begin{lem}
\label{orbits}

The family $\mathcal{F}$ of finite functions taking $\bar{a}$ to a tuple $\bar{b}$ satisfying $\varphi_{\bar{a}}$ has the back-and-forth property.

\end{lem}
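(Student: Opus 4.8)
The plan is to verify directly that $\mathcal{F}$ is a back-and-forth family of finite partial isomorphisms, after which the usual consequence follows: since $\bar{a} \models \varphi_{\bar{a}}$, the map $\bar{a} \mapsto \bar{a}$ lies in $\mathcal{F}$, so any $\bar{b}$ with $\bar{b} \models \varphi_{\bar{a}}$ is joined to $\bar{a}$ by an automorphism built from a back-and-forth chain, whence $\varphi_{\bar{a}}$ defines exactly the orbit of $\bar{a}$. First I would record the easy preliminaries. Because $\alpha \geq 2$, every finitary quantifier-free formula and its negation is $\Pi_{<\alpha}$, so it lies in the complete $\Pi_{<\alpha}$ type of $\bar{a}$; since $\varphi_{\bar{a}}$ generates this type, any $\bar{b} \models \varphi_{\bar{a}}$ has the same quantifier-free type as $\bar{a}$ (including all the inequalities $x_i \neq x_j$), so the assignment $\bar{a} \mapsto \bar{b}$ is a well-defined finite partial isomorphism and $\mathcal{F}$ is non-empty.

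The conceptual heart is an observation that upgrades the one-sided hypothesis $\bar{b} \models \varphi_{\bar{a}}$ into full symmetry. From the generating property, $\bar{b} \models \varphi_{\bar{a}}$ forces $\bar{b}$ to satisfy every $\Pi_{<\alpha}$ formula true of $\bar{a}$, i.e. the $\Pi_{<\alpha}$ type of $\bar{a}$ is contained in that of $\bar{b}$. Dualizing, every $\Sigma_{<\alpha}$ formula true of $\bar{b}$ is true of $\bar{a}$: if $\theta$ is $\Sigma_{<\alpha}$ and $\bar{a} \models neg(\theta)$, then $neg(\theta)$, being $\Pi_{<\alpha}$ and true of $\bar{a}$, would be true of $\bar{b}$, contradicting $\bar{b} \models \theta$. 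In particular $\varphi_{\bar{b}}$ is itself a $\Sigma_{<\alpha}$ formula true of $\bar{b}$, hence true of $\bar{a}$, so $\bar{a} \models \varphi_{\bar{b}}$. Thus membership in $\mathcal{F}$ is symmetric, and $\bar{a}$ and $\bar{b}$ have identical complete $\Sigma_{<\alpha}$ and $\Pi_{<\alpha}$ types. This symmetry is the step I expect to carry the whole argument: it is what lets me transfer existential ($\Sigma_{<\alpha}$) information in the direction needed for the forth condition, which the raw hypothesis does not supply.

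With type equality in hand, the forth condition is immediate. Given $\bar{a} \mapsto \bar{b}$ in $\mathcal{F}$ and any $a'$, the formula $(\exists x)\,\varphi_{\bar{a},a'}(\bar{u},x)$ is $\Sigma_{<\alpha}$ and is true of $\bar{a}$ (witnessed by $a'$, since $\bar{a},a' \models \varphi_{\bar{a},a'}$); because $\bar{a}$ and $\bar{b}$ share the same $\Sigma_{<\alpha}$ type, it is true of $\bar{b}$, producing $b'$ with $\bar{b},b' \models \varphi_{\bar{a},a'}$, i.e. $\bar{a},a' \mapsto \bar{b},b'$ in $\mathcal{F}$. Since $\mathcal{F}$ is closed under inverses by the symmetry above (and symmetry applies equally to the extended tuples), the back condition is just the forth condition applied to $\bar{b} \mapsto \bar{a}$, so nothing further is needed.

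The only genuine obstacle is the forth/symmetry interplay just described. A first attempt to prove forth directly fails, because $\bar{b} \models \varphi_{\bar{a}}$ only transfers $\Pi_{<\alpha}$ facts from $\bar{a}$ to $\bar{b}$, whereas forth needs an existential ($\Sigma_{<\alpha}$) fact transferred in that same direction. The resolution is to notice that the generating property, read contrapositively, also transfers $\Sigma_{<\alpha}$ facts from $\bar{b}$ back to $\bar{a}$, and that applying this to the generator $\varphi_{\bar{b}}$ forces the relation to be symmetric and the two types to coincide. I would double-check the complexity bookkeeping at each use of $neg$ (that negating a $\Sigma_\beta$ formula with $\beta < \alpha$ stays $\Pi_{<\alpha}$, and that $(\exists x)\varphi_{\bar{a},a'}$ remains $\Sigma_{<\alpha}$), since everything hinges on staying strictly below $\alpha$; but these are routine given the normal-form definitions.
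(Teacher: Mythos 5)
Your proof is correct and follows essentially the same route as the paper's: the key step in both is upgrading the one-sided hypothesis $\bar{b}\models\varphi_{\bar{a}}$ to symmetry by observing that $neg(\varphi_{\bar{b}})$ is $\Pi_{<\alpha}$, and then both extension conditions follow by transferring a single $\Sigma_{<\alpha}$ (equivalently, negated $\Pi_{<\alpha}$) fact across the pair. The only differences are cosmetic — the paper proves the back condition directly and derives forth by swapping roles, while you do the mirror image via explicit equality of $\Sigma_{<\alpha}$ types — so nothing of substance changes.
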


\begin{proof} [Proof of Lemma] 

We first show that for any $\bar{a}$ and $\bar{b}$, if $\bar{b}$ satisfies $\varphi_{\bar{a}}$, then $\bar{a}$ satisfies $\varphi_{\bar{b}}$.  To see this, note that $neg(\varphi_{\bar{b}}(\bar{x}))$ is $\Pi_{<\alpha}$.  If this were true of $\bar{a}$, then it would be true of $\bar{b}$, a contradiction.  Suppose $\bar{b}$ satisfies $\varphi_{\bar{a}}(\bar{x})$.  For any $d$, there exists $c$ such that $\bar{a},c$ satisfies $\varphi_{\bar{b},d}(\bar{x},y)$.  To see this, note that $(\forall y) neg(\varphi_{\bar{b},d}(\bar{x},y))$ is 
$\Pi_{<\alpha}$, so if it were true of $\bar{a}$, then it would also be true of $\bar{b}$, a contradiction.  If $\bar{a},c$ satisfies $\varphi_{\bar{b},d}(\bar{x},y)$, then $\bar{b},d$ satisfies $\varphi_{\bar{a},c}(\bar{x},y)$, so we can go back.  
Now, suppose that $\bar{b}$ satisfies $\varphi_{\bar{a}}(\bar{x})$, and take $c$.  Since $\bar{a}$ satisfies $\varphi_{\bar{b}}(\bar{x})$, the argument above says that there exists $d$ such that $\bar{b},d$ satisfies $\varphi_{\bar{a},c}(\bar{x},y)$, and then $\bar{a},c$ satisfies $\varphi_{\bar{b},d}(\bar{x},y)$.  Therefore, we can go forth.  Hence, for each $\bar{a}$, $\varphi_{\bar{a}}(\bar{x})$ is a $\Sigma_{<\alpha}$ formula that defines the orbit of $\bar{a}$.  This completes the proof of Theorem \ref{MontalbanVariant}.
\end{proof}

Below, we give a pair of examples.  

\bigskip
\noindent
\textbf{Example 1}. Let $\mathcal{A}$ be an ordering of type $\omega^\omega$.  Then the orbits of all tuples in $\mathcal{A}$ are defined by $\Sigma_{<\omega}$ formulas (in fact, the natural defining formulas are computable $\Sigma_{<\omega}$).  However, $\mathcal{A}$ has no $\Pi_\omega$ Scott sentence.

\begin{proof}

We use the following familiar results (see \cite{AK}).

\bigskip
\noindent
\textbf{Facts}:\  

\begin{enumerate}

\item  $\omega^\omega\leq_\omega\omega^{\omega+1}$,  

\item  for each $\beta < \omega^\omega$, there are computable $\Sigma_{<\omega}$ formulas $\lambda(x)$ and $\mu(x,y)$ such that $\lambda(x)$ holds iff the interval to the left of $x$ has order type $\beta$ and $\mu(x,y)$ holds iff the interval between $x$ and $y$ has order type $\beta$.  

\end{enumerate}
It follows from Fact 1 and a well-known result of Karp (see \cite{Keisler} or \cite{KK}) that every $\Pi_\omega$ sentence true of $\omega^\omega$ is true of $\omega^{\omega+1}$.  Therefore, $\omega^\omega$ has no $\Pi_\omega$ Scott sentence.  Take a tuple $\bar{a} = (a_1,\ldots,a_n)$.  Ordinals are rigid, so to define the orbit of $\bar{a}$, we define the tuple itself.  Say that the interval to the left of $a_i$ has type $\beta_i$.  Applying Fact 2, we get a computable $\Sigma_{<\omega}$ formula $\lambda_i(x_i)$ saying that the interval to the left of $x_i$ has type $\beta_i$.  The conjunction of the formulas $\lambda_i(x_i)$ defines the tuple $\bar{a}$.       
\end{proof}

\noindent
\textbf{Example 2}.  Let $\mathcal{A}$ be an expansion of the ordering of type $\omega^\omega$ with a unary predicate $U_0$ for the interval $[0,\omega)$ and unary predicates $U_n$ for the interval $[\omega^n,\omega^{n+1})$, for $n\geq 1$.  Again, we have computable $\Sigma_{<\omega}$ formulas defining the orbits of all tuples.  We have a computable $\Pi_\omega$ Scott sentence.  This is the conjunction of a computable $\Pi_2$ sentence saying $(\forall x)\bigvee_n U_n(x)$, a finitary $\Pi_2$ sentence saying that $<$ is a linear ordering of the universe, with all elements of $U_n$ before all elements of $U_{n+1}$, and computable $\Sigma_{<\omega}$ sentences saying what is the order type of $U_n$.       

\bigskip

We turn to the effective version of Theorem \ref{MontalbanVariant}.  

\begin{thm}
\label{effectiveMontalban}

Let $\mathcal{A}$ be a structure for a computable language $L$ (the structure need not have a computable copy).  Suppose $\alpha\geq 2$.  If $\mathcal{A}$ has a computable $\Pi_{\alpha}$ Scott sentence, then the orbit of each tuple is defined by a computable $\Sigma_{<\alpha}$ formula.    

\end{thm}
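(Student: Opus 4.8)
The plan is to give an effective version of the proof of Theorem \ref{MontalbanVariant}, adapting the consistency-property argument so that it is carried out in a computable fashion. The key insight is that the non-effective proof produced, for each tuple $\bar{a}$, a $\Sigma_{<\alpha}$ formula $(\exists\bar{x})\chi(\bar{u},\bar{x})$ generating the complete $\Pi_{<\alpha}$ type of $\bar{a}$, obtained from a finite set $S\in\mathcal{C}$ and a tuple of constants $\bar{c}$ witnessing the failure of Condition (8). To effectivize, I would first replace the given computable $\Pi_\alpha$ Scott sentence $\varphi$ by its associated consistency property $\mathcal{C}$, now consisting of finite sets of \emph{computable} $\Sigma_\beta$ or $\Pi_\beta$ sentences (with $\beta+1<\alpha$) over $L\cup C$ that are satisfiable in $\mathcal{A}$ under some injective assignment of constants. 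The crucial point is that the satisfiability relation used to define membership in $\mathcal{C}$, together with the failure of Condition (8), must be detected effectively, so that the witnessing $S$ and $\bar{c}$ can be found by an effective search and the resulting formula $\varphi_{\bar a}$ is genuinely computable $\Sigma_{<\alpha}$.

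First I would set up the enumeration. Using Vanden Boom's Theorem (Theorem \ref{Vanden Boom}), the computable $\Pi_\alpha$ Scott sentence $\varphi$ corresponds to an effective $\Pi_\alpha$ class, and the clauses $\varphi_i$, $\psi_{i,j}$ appearing in $\varphi$ are computably indexed. Since all the formulas placed into sets $S\in\mathcal{C}$ are computable $\Sigma_{<\alpha}$ or $\Pi_{<\alpha}$, they can be effectively enumerated, and finite sets of them form a computable (indeed c.e.) family of candidate sets. The heart of the effective argument is to show that the failure of Condition (8) is witnessed by a set $S$ and tuple $\bar c$ that can be found uniformly: running the Makkai–Henkin construction driven by $\mathcal{C}$ and asking at each stage whether Condition (8) can be invoked, the Claim guarantees that somewhere the construction gets ``stuck.'' I would then take $\chi(\bar c,\bar c')$ to be the finite conjunction of the sentences of $S$ together with the distinctness statements, exactly as before, and set $\varphi_{\bar a}(\bar u)$ to be $(\exists\bar x)\chi(\bar u,\bar x)$. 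Because $\chi$ is a finite conjunction of computable $\Sigma_{<\alpha}$ and $\Pi_{<\alpha}$ formulas, the existential quantification is again computable $\Sigma_{<\alpha}$, so the defining formula lands in the right class.

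Once the formulas $\varphi_{\bar a}$ are in hand, the back-and-forth verification is \emph{identical} to the proof of Lemma \ref{orbits}: the key moves there used only that $neg(\varphi_{\bar b})$ and $(\forall y)\,neg(\varphi_{\bar b,d})$ are $\Pi_{<\alpha}$ and hence belong to the type generated by the corresponding $\Sigma_{<\alpha}$ formula, and this remains true in the computable setting since the negation of a computable $\Sigma_{<\alpha}$ formula is computable $\Pi_{<\alpha}$. Thus $\varphi_{\bar a}$ defines the orbit of $\bar a$, and it is computable $\Sigma_{<\alpha}$.

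I expect the main obstacle to be the effectivity of membership in $\mathcal{C}$ and of detecting the failure of Condition (8). In the non-effective proof, $S\in\mathcal{C}$ is defined by the \emph{existence} of a satisfying assignment of constants into $\mathcal{A}$, and the failure of (8) is an $\exists S\,\exists\bar c\,\forall(\ldots)$ statement over all $\Pi_{<\alpha}$ formulas true of $\bar a$ — an a priori non-arithmetical quantifier over formulas. The delicate work is to argue that this quantification can be replaced by an effective search: rather than verifying directly that $\mathcal{C}$ fails Condition (8), I would argue contrapositively, as in the Claim, that if no effectively-findable $S,\bar c$ witnessed the failure then the Makkai construction, relativized to an oracle computing the atomic diagram of $\mathcal{A}$ and the type of $\bar a$, would produce a model of $\varphi$ omitting the $\Pi_{<\alpha}$ type of $\bar a$, contradicting that $\varphi$ is a Scott sentence. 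Making this search uniform in the index for $\varphi$ and in $\bar a$ — so that the resulting $\varphi_{\bar a}$ is a genuine computable infinitary formula with a c.e.\ index — is where care is needed, and is the step I would write out in full detail.
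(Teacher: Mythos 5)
Your overall strategy is exactly the paper's: take the consistency property $\mathcal{C}$ whose members are the finite sets of computable $\Sigma_\beta$ and $\Pi_\beta$ sentences (for $\beta+1<\alpha$) satisfiable in $\mathcal{A}$ under an injective assignment of constants, run the Claim (failure of the analogue of Condition (8), now restricted to computable $\Pi_{<\alpha}$ formulas true of $\bar{a}$), extract a witnessing $S$ and $\bar{c}$, set $\varphi_{\bar{a}}(\bar{u}) = (\exists\bar{x})\chi(\bar{u},\bar{x})$, and rerun the back-and-forth argument of Lemma \ref{orbits} word for word.

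However, what you single out as the ``crucial point'' and ``main obstacle'' --- that membership in $\mathcal{C}$ and the failure of Condition (8) be \emph{detected effectively}, so that $S$ and $\bar{c}$ are found by an effective search, uniformly in $\bar{a}$ and in the index of $\varphi$ --- is a misconception, and it is precisely the step you defer ``to write out in full detail.'' No such effectivity is needed, and in the uniform form you state it is unattainable: the theorem is a pure existence statement (for each $\bar{a}$ there \emph{exists} a computable $\Sigma_{<\alpha}$ formula defining its orbit), and since $\mathcal{A}$ need not have a computable copy, no algorithm could produce the witnessing data from $\bar{a}$ anyway. The consistency property remains a non-effective, set-theoretic object: its \emph{elements} are finite sets of computable sentences, but membership is defined by satisfiability in $\mathcal{A}$, and the Makkai chain in the Claim is built non-effectively, exactly as in Theorem \ref{MontalbanVariant} --- any such chain yields a model of the computable sentence $\varphi$, hence a structure isomorphic to $\mathcal{A}$, which is all the contradiction requires. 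What makes $\varphi_{\bar{a}}$ computable $\Sigma_{<\alpha}$ is simply that $\chi$ is a \emph{finite} conjunction of computable $\Sigma_\beta$/$\Pi_\beta$ formulas: an index for it can be assembled from the finitely many indices of its conjuncts, however non-constructively those conjuncts were located. So your oracle-relativized rerun of the Claim and the worry about the ``non-arithmetical quantifier over formulas'' can both be deleted; once they are, your sketch collapses to the paper's proof, whose only changes from the boldface case are that the sentences in the sets $S$ are computable, that $\varphi_{\bar{a}}$ generates the \emph{computable} $\Pi_{<\alpha}$ type of $\bar{a}$, and that this restricted generation property still suffices for Lemma \ref{orbits} because $neg(\varphi_{\bar{b}})$ and $(\forall y)\, neg(\varphi_{\bar{b},d})$ are themselves computable $\Pi_{<\alpha}$. (Uniformity in $\bar{a}$ is genuinely needed only for a stronger conclusion, as in the paper's Proposition deriving a computable $\Pi_{\alpha+1}$ Scott sentence from a computable Scott family; it is not part of the present theorem.)
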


\begin{proof}

The proof essentially the same as that for Theorem \ref{MontalbanVariant}.  The corresponding rules for a consistency property involve computable infinitary formulas, so the conjunctions and disjunctions are over c.e. sets of indices.  Our particular consistency property $\mathcal{C}$ will consist of the finite sets of computable $\Pi_\beta$ and computable $\Sigma_\beta$ sentences, for $\beta+1 < \alpha$, such that some interpretation of the constants, mapping distinct constants to distinct elements of $\mathcal{A}$, makes all of the sentences true. These technical changes necessitate no significant change in the argument that constructs, for each tuple $\bar{a}$, a computable 
$\Sigma_{<\alpha}$ formula true of $\bar{a}$ that implies all computable $\Pi_{<\alpha}$ formulas true of $\bar{a}$.    

For each $\bar{a}$, let $\varphi_{\bar{a}}(\bar{x})$ be a $\Sigma_{<\alpha}$ formula that implies all computable $\Pi_{<\alpha}$-formulas true of $\bar{a}$.  To show that for each $\bar{a}$, the orbit is defined by $\varphi_{\bar{a}}$, the following analogue of Lemma \ref{orbits} suffices; the proof is exactly the same as above.

\begin{lem}
\label{orbits.effective}

The family $\mathcal{F}$ of finite functions taking a tuple $\bar{a}$ to a tuple $\bar{b}$ satisfying $\varphi_{\bar{a}}$ has the back-and-forth property.

\end{lem}
\end{proof}

We do not have the effective version for the other implication even in the case where $\alpha$ is a computable successor ordinal.     

\begin{prop}

There is a computable structure $\mathcal{A}$ such that the orbits of all tuples are defined by computable $\Sigma_1$ (even finitary quantifier-free) formulas, but there is no computable $\Pi_2$ Scott sentence. 

\end{prop}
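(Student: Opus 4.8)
The plan is to use the effective version of Vaught's theorem (Theorem~\ref{Vanden Boom}): a structure has a computable $\Pi_2$ Scott sentence iff its isomorphism class is effective $\Pi_2$ in the effective Borel hierarchy, i.e.\ lightface $\Pi^0_2$ as a subset of $2^\omega$. So it suffices to produce a computable $\mathcal{A}$ whose orbits are finitary quantifier-free definable but whose isomorphism class is not lightface $\Pi^0_2$. Since the orbits will in particular be $\Sigma_1$-definable, Montalb\'an's theorem (the $\Leftarrow$ direction of Theorem~\ref{Montalban}) already supplies a \emph{non-effective} $\Pi_2$ Scott sentence; thus the obstruction must be purely effective. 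Concretely, that Scott sentence is $\bigwedge_{\bar a}\rho_{\bar a}$, and to make it computable one needs the assignment $\bar a\mapsto\varphi_{\bar a}$ of orbits to their defining formulas, together with the enumeration of the one-point extensions appearing in each $\rho_{\bar a}$, to be given computably. The construction must defeat exactly this, so that the orbits are quantifier-free definable yet no computable $\Pi_2$ sentence succeeds.

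First I would fix a set $S$ that is $\Pi^0_2$ but not $\Sigma^0_2$ (for instance $S=\{e:W_e\text{ is infinite}\}$) and build a computable $\mathcal{A}$ coding $S$. The delicate point, which dictates the whole design, is this: if the isomorphism type of $\mathcal{A}$ were determined by the \emph{cardinalities} of its quantifier-free definable orbits, then those cardinalities could be compared threshold by threshold against the computable structure $\mathcal{A}$ (each condition ``$\varphi^{\mathcal{B}}$ has at least $k$ elements'' being uniformly $\Sigma^0_1$), yielding a lightface $\Pi^0_2$ description and hence a computable $\Pi_2$ Scott sentence. So the coding cannot live in orbit-sizes; it must live in the back-and-forth (extension) structure. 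I would therefore work in a relational language and build $\mathcal{A}$ from blocks in which every element and tuple is quantifier-free rigid---its automorphism orbit equals its atomic type and is pinned down by a finite conjunction of atomic and negated-atomic formulas, which gives the ``even finitary quantifier-free'' clause---while arranging that for each $n$ it is the \emph{pattern of which quantifier-free types extend which, realized infinitely often}, that records whether $n\in S$. Verifying the orbit claim is then a local check inside the blocks; the real work is to show that an isomorphism between two such structures forces agreement of these infinitary extension-patterns, so that the isomorphism type genuinely depends on the non-$\Sigma^0_2$ information in $S$.

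Finally, assuming a computable $\Pi_2$ Scott sentence $\psi$, the class $\{\mathcal{B}:\mathcal{B}\models\psi\}$ is lightface $\Pi^0_2$, so ``$\mathcal{B}\cong\mathcal{A}$'' is a $\Pi^0_2$ condition on the coded diagram of $\mathcal{B}$. I would then produce a uniformly computable family $\mathcal{A}_n$ agreeing with $\mathcal{A}$ except that the $n$-th feature is forced to occur only finitely often, so that $\mathcal{A}_n\cong\mathcal{A}$ iff $n\notin S$. Feeding the (uniformly computable) diagrams of $\mathcal{A}_n$ into the $\Pi^0_2$ condition shows $\overline{S}=\{n:\mathcal{A}_n\models\psi\}$ is $\Pi^0_2$, i.e.\ $S\in\Sigma^0_2$, contradicting the choice of $S$; by Theorem~\ref{Vanden Boom} this rules out a computable $\Pi_2$ Scott sentence.

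I expect the main obstacle to be the simultaneous balancing of three demands on the blocks: (i) every orbit must be captured by a \emph{finite} quantifier-free formula, so each element can ``avoid'' only boundedly much; (ii) the isomorphism type must \emph{not} be recoverable from orbit-cardinalities alone, for otherwise the effective $\Pi_2$ Scott sentence reappears; and (iii) the comparator structures $\mathcal{A}_n$ must be uniformly computable with the $n$-th feature the \emph{only} change, so that the finite part of each feature is controlled exactly---the familiar difficulty of converting infinitely-often behaviour into a computable construction without leakage. It is precisely the clash between (i) and (ii) that forces a relational, block-based design rather than a structure of named unary predicates, for which (ii) provably fails.
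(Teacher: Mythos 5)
Your high-level reduction scheme is coherent as far as it goes: if $\psi$ were a computable $\Pi_2$ Scott sentence, then for any uniformly computable sequence $(\mathcal{A}_n)_{n\in\omega}$ the set $\{n:\mathcal{A}_n\models\psi\}$ is $\Pi^0_2$ (you do not even need Theorem~\ref{Vanden Boom} for this direction; satisfaction of a computable $\Pi_2$ sentence in a uniformly computable structure is directly a $\Pi^0_2$ condition), so a uniformly computable family with $\mathcal{A}_n\cong\mathcal{A}$ iff $n\in\overline{S}$, where $\overline{S}$ is properly $\Sigma^0_2$, would yield the contradiction. The problem is that your proof stops exactly where the content of the proposition begins: you never construct $\mathcal{A}$, the ``blocks,'' the coding of $S$ into ``extension patterns,'' or the comparators $\mathcal{A}_n$, and you yourself flag constraints (i)--(iii) as an unresolved obstacle. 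The construction \emph{is} the theorem; what you have is a correct observation about what any construction must avoid, together with a plan whose feasibility is left open.

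Moreover, the one concrete design decision you commit to is false, and it would steer you away from the known solution. You assert that for ``a structure of named unary predicates'' constraint (ii) provably fails, i.e., that such a structure with quantifier-free definable orbits has its isomorphism type determined by the cardinalities of quantifier-free definable sets. The paper's counterexample is precisely a unary-predicate structure. It takes a computable tree $T\subseteq 2^{<\omega}$ with no terminal nodes whose unique non-isolated path $p$ is non-computable, and lets $\mathcal{A}$ have infinitely many elements of color $f$ for each isolated path $f$ of $T$ and no element of color $p$; orbits are finitary quantifier-free definable because each isolated path is pinned down by a finite node. The structure $\mathcal{B}$ obtained by adding a single element of color $p$ gives every finitary quantifier-free formula exactly the same number of solutions (always $0$ or $\aleph_0$), yet $\mathcal{B}\not\cong\mathcal{A}$. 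The step you took for granted---that a structure with the same finitary counts as $\mathcal{A}$ must itself be assembled from finitely described colors---is exactly what fails: $\mathcal{B}$ realizes a limit color that no finite formula captures. The paper then concludes not by an index-set reduction but by a preservation argument: any computable $\Sigma_2$ sentence true in $\mathcal{B}$ is true in $\mathcal{A}$, because the c.e.\ first-order theory consisting of the axioms, a generator of the type of the witnessing tuple, and the c.e.\ set of universal conjuncts of the witnessing computable $\Pi_1$ formula cannot prove $\sigma(d)$ for every finite $\sigma\subset p$ (otherwise $p$ would be computable); a model witnessing this failure has all relevant elements on isolated paths and hence elementarily embeds a copy of $\mathcal{A}$ containing the witnesses. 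So the non-computability is concentrated in a single omitted non-computable path, with no appeal to $\Pi^0_2$-completeness at all. To salvage your route you would still have to carry out a construction of comparable delicacy, after discarding the claim that unary predicates cannot work.
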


\begin{proof}

The proof owes much to that of Badaev \cite{B}, showing that there is a computable enumeration of a ``discrete'' set of functions that is not ``effectively discrete''.  We start with a computable subtree $T$ of $2^{<\omega}$ with the following features.

\begin{enumerate}

\item  There are no terminal nodes.

\item  There is just one non-isolated path $p$, where this is non-computable.

\end{enumerate}

We may construct $T$ such that for all $\sigma\in T$, $\sigma 0\in T$, and the only non-isolated path has the form $0^{s_0} 1^{k_0}0^{s_1}1^{k_1}\ldots$, where $(k_n)_{n\in\omega}$ is a list of the elements of the halting set, in increasing order, and $s_n$ is the number of steps in the halting computation of $\varphi_{k_n}(k_n)$.
At stage $0$, we put $\emptyset$ into $T$.  Suppose that we have determined $T_s$ at stage $s$, where $T_s$ is the set of nodes in $T$ of length at most $s$.  At stage $s+1$, we add $\sigma 0$ for all $\sigma$ of length $s$.  In addition, we consider $\varphi_{k,s+1}(k)$ for all $k\leq s+1$.  For the computations that halt, we arrange the $k$'s to form $k_0 < k_1 < \ldots < k_r$, and determine the appropriate halting times $s_0,\ldots,s_r$.  We put into $T_{s+1}$ the appropriate initial segment of the sequence $0^{s_0}1^{k_0}\cdots 0^{s_r}1^{k_r}$.  Note that if we are inserting a new $k_i$, it is because $s_i = s+1$, and we already had the appropriate initial segment in $T_s$.  At stage $s+1$, the nodes just described are the only ones that we add to $T_{s+1}$.  Therefore, the tree $T = \bigcup_{s \in \omega} T_s$ is computable.    

We turn the tree $T$ into a class of structures.  The language $\mathcal{L}$ consists of unary predicates $U_n$ for $n\in\omega$. In each $\mathcal{L}$-structure $\mathcal{A}$, we have infinitely many elements $a$ representing each isolated path $f$ in $T$, in that if $f(n) = 0$, then $\mathcal{A}\models \neg{U_na}$ and if $f(n) = 1$, then $\mathcal{A}\models U_na$.  

We can give a computable set of axioms for the elementary first order theory of these structures.  For $\sigma\in T$, we have a finitary quantifier-free formula $\sigma(x)$ that is the conjunction of $U_nx$ for $\sigma(n) = 1$ and $\neg{U_nx}$ for $\sigma(n) = 0$.  For each $n$, we have a finitary quantifier-free formula $T_n(x)$ that is the disjunction over $\sigma\in T\cap 2^n$ of the formulas $\sigma(x)$.  Consider the axioms $(\forall x) T_n(x)$ for all $n$ and $(\exists ^{\geq n} x)\sigma(x)$ for $\sigma\in T$.  Let $T^*$ be the theory generated by these axioms.

The countable models of $T^*$ all have infinitely many elements representing each isolated path.  In addition, they may have one or more elements representing the non-isolated path.  To see that the axioms generate a complete theory, we note that any finitary sentence mentions only finitely many $U_m$, say for $m < n$.  The reducts of the various countable models of $T^*$ to this smaller language are all isomorphic.        

Consider the model of $T^*$ with no elements representing the non-isolated path.  Clearly, this model has a computable copy $\mathcal{A}$ with universe $A = \omega$.  To see this, computably partition the set $A$ into infinitely many infinite, computable sets.  For each $\sigma \in T$, consider the infinite path composed of $\sigma$ followed by all $0$'s; assign all of the elements $a$ from one of the infinite sets in the partition to this path.  Note that if $\sigma_i$ isolates the path represented by $a_i$, then the conjunction of the formulas giving the equalities on the $a_i$ and the formulas $\sigma_i$ generates the complete elementary first order type of $\bar{a}$.  Moreover, since the language has only unary predicates, this formula actually defines the orbit of $\bar{a}$.  Similarly, for any model $\mathcal{D}$ of $T^*$, $\mathcal{A}$ is isomorphic to an elementary substructure of $\mathcal{D}$, and any mapping that sends each element $a \in \mathcal{A}$ to an element $d \in \mathcal{D}$ representing the same isolated path is an elementary embedding.

Let $\mathcal{B}$ be the model of $T^*$ with just one  element $b$ representing the non-isolated path.  We write $\mathcal{A}$ for the substructure of $\mathcal{B}$ isomorphic to the structure $\mathcal{A}$ above.  We want to show that any computable $\Pi_2$ sentence true of $\mathcal{A}$ is true of $\mathcal{B}$.  It is enough to show that any computable $\Sigma_2$ sentence true of $\mathcal{B}$ is true of $\mathcal{A}$.  Take a computable $\Sigma_2$ sentence $\varphi = \bigvee (\exists\bar{u}_i)\varphi_i(\bar{u}_i)$ true of $\mathcal{B}$.  Say $\mathcal{B}\models\varphi_i(b,\bar{a})$, where $\varphi_i(x,\bar{v})$ is computable $\Pi_1$, $\bar{u}_i = x,\bar{v}$.  Let $\delta(\bar{v})$ be a finitary quantifier-free formula generating the type of $\bar{a}$.  Let $p(x,\bar{v})$ be the c.e.\ set of finitary universal conjuncts of $\varphi_i(\bar{u}_i)$.  Let $\Gamma$ consist of the computable set of axioms for $T^*$, plus $\delta(\bar{c})$, plus 
$p(d,\bar{c})$.  

Let $f$ be the non-isolated path through $T$.  We cannot have $\Gamma\vdash\sigma(d)$, for all finite $\sigma\subseteq f$, since then $f$ would be computable.  So, for some $\sigma\subseteq f$, $\Gamma\cup\{\neg{\sigma(d)}\}$ has a model $\mathcal{C}$, where the elements of $\bar{c}$ and $d$ necessarily all represent isolated paths.  Now, by what we noted above, the reduct of $\mathcal{C}$ to the language $\mathcal{L}$ has an elementary substructure isomorphic to $\mathcal{A}$, and we may suppose that $\bar{c}$ and $d$ are in this elementary substructure, and that $\bar{a}$ is mapped to $\bar{c}$.  Therefore, for the element $a'$ mapped to $d$, $\mathcal{A}\models\varphi_i(a',\bar{a})$, as required.    \end{proof} 

The next result gives conditions sufficient to guarantee that a computable structure has a computable $\Pi_{\alpha+1}$ Scott sentence. 

\begin{prop}

Let $\alpha\geq 1$ be a computable ordinal.  Suppose $\mathcal{A}$ is a computable structure, and there is a $\Sigma_\alpha$ Scott family $\Phi$ consisting of computable $\Sigma_\alpha$ formulas, with no parameters.  Then $\mathcal{A}$ has a computable $\Pi_{\alpha+1}$ Scott sentence.    

\end{prop}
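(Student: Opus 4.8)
The plan is to effectivize the proof of the implication $\Leftarrow$ in Montalb\'{a}n's Theorem \ref{Montalban} given at the start of this section, keeping track of indices so that the $\Pi_{\alpha+1}$ sentence produced there is in fact \emph{computable} $\Pi_{\alpha+1}$. For each tuple $\bar{a}$ I will use a computable $\Sigma_\alpha$ formula $\varphi_{\bar{a}}(\bar{x})$ defining the orbit of $\bar{a}$ (obtained from the Scott family $\Phi$), and form the same conjuncts as before: $\rho_\emptyset$ is $\bigwedge_b (\exists x)\varphi_b(x) \wedge (\forall x)\bigvee_b\varphi_b(x)$, and for nonempty $\bar{a}$, $\rho_{\bar{a}}$ is $(\forall\bar{u})[\varphi_{\bar{a}}(\bar{u})\rightarrow(\bigwedge_b (\exists x)\varphi_{\bar{a},b}(\bar{u},x) \wedge (\forall x)\bigvee_b\varphi_{\bar{a},b}(\bar{u},x))]$. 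The Scott sentence is $\bigwedge_{\bar{a}}\rho_{\bar{a}}$, and the verification that its countable models are exactly the copies of $\mathcal{A}$ is verbatim the argument already given for the noneffective $\Leftarrow$ direction. So the only thing left is to check that the sentence is computable $\Pi_{\alpha+1}$.

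The key observation for effectivity is that all the index sets over which we take conjunctions and disjunctions are computable, hence c.e. Since $\mathcal{A}$ has universe $\omega$ and is computable, the outer tuples $\bar{a}$ range over the computable set $\omega^{<\omega}$, and inside each $\rho_{\bar{a}}$ the index $b$ ranges over $\omega$. Indexing the existential conjuncts $\bigwedge_b(\exists x)\varphi_{\bar{a},b}(\bar{u},x)$ by \emph{actual} elements $b$ of $\mathcal{A}$ automatically restricts attention to orbits realized in $\mathcal{A}$ (each is witnessed by $\bar{a},b$ itself), so each conjunct is genuinely true in $\mathcal{A}$; dually, $(\forall x)\bigvee_b\varphi_{\bar{a},b}(\bar{u},x)$ over actual $b$ forces models to realize only orbits occurring in $\mathcal{A}$. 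Thus no noneffective test of ``which extensions are realized'' is ever required, and each of $\bigwedge_{\bar a}$, $\bigwedge_b$, $\bigvee_b$ is taken over a computable set of indices.

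For the complexity count I would argue exactly as in the noneffective case, now noting computability at each step. Each $(\exists x)\varphi_b$ is computable $\Sigma_\alpha$, so the c.e. conjunction $\bigwedge_b(\exists x)\varphi_b$ is computable $\Pi_{\alpha+1}$; $\bigvee_b\varphi_b$ is computable $\Sigma_\alpha$, so $(\forall x)\bigvee_b\varphi_b$ is computable $\Pi_{\alpha+1}$; hence $\rho_\emptyset$ is computable $\Pi_{\alpha+1}$. For $\rho_{\bar{a}}$, $neg(\varphi_{\bar{a}})$ is computable $\Pi_\alpha\subseteq\Pi_{\alpha+1}$, the consequent is a conjunction of two computable $\Pi_{\alpha+1}$ formulas, the implication is a finite disjunction of computable $\Pi_{\alpha+1}$ formulas and so is computable $\Pi_{\alpha+1}$, and prefixing $(\forall\bar{u})$ preserves this. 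Finally $\bigwedge_{\bar{a}}\rho_{\bar{a}}$, a c.e.\ conjunction over $\omega^{<\omega}$ of computable $\Pi_{\alpha+1}$ sentences, is computable $\Pi_{\alpha+1}$.

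The one step I expect to be the genuine obstacle is producing, uniformly in $\bar{a}$, an index for the computable $\Sigma_\alpha$ orbit formula $\varphi_{\bar{a}}$ that the construction consumes. When $\alpha=1$ this is automatic: writing $\Phi=\{\psi_i\}$, satisfaction of a computable $\Sigma_1$ formula in the computable structure $\mathcal{A}$ is c.e., so $\{i:\mathcal{A}\models\psi_i(\bar{a})\}$ is c.e.\ uniformly in $\bar{a}$, and $\varphi_{\bar{a}}(\bar{x}):=\bigvee\{\psi_i(\bar{x}):\mathcal{A}\models\psi_i(\bar{a})\}$ is a computable $\Sigma_1$ formula defining the orbit of $\bar{a}$ (any $\psi_i$ that $\bar{a}$ satisfies already defines that orbit, since members of a Scott family are preserved by automorphisms). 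For $\alpha\geq 2$, satisfaction of $\Sigma_\alpha$ formulas is only $\Sigma^0_\alpha$, so this naive disjunction need not be c.e.; here the hypothesis that $\Phi$ is a \emph{computable} $\Sigma_\alpha$ Scott family must be used precisely to furnish a uniformly computable assignment $\bar{a}\mapsto\varphi_{\bar{a}}$. Making this uniform assignment precise is the crux of the argument; once it is in hand, the bookkeeping above finishes the proof.
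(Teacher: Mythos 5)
Your bookkeeping is fine and in fact matches the paper's construction (the Scott sentence is assembled exactly as in the $\Leftarrow$ direction of Montalb\'{a}n's theorem, and a c.e.\ conjunction of computable $\Pi_{\alpha+1}$ sentences indexed by tuples from $\omega^{<\omega}$ is computable $\Pi_{\alpha+1}$), and your $\alpha=1$ argument is correct. But you have not proved the proposition: for $\alpha\geq 2$ you explicitly leave open the uniformly computable assignment $\bar{a}\mapsto\varphi_{\bar{a}}$, and that assignment is the entire content of the paper's proof, not a detail that can be deferred. You correctly diagnose why the naive disjunction fails (satisfaction of computable $\Sigma_\alpha$ formulas in a computable structure is only $\Sigma^0_\alpha$, not c.e.), but saying that the hypothesis on $\Phi$ ``must be used precisely to furnish'' the assignment names the obstacle without overcoming it.

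The paper's device for closing exactly this gap is as follows. Fix a notation $a$ for $\alpha$ and arrange that the members of $\Phi$ have indices of the form $(\Sigma,a,e)$. Let $R$ be the relation consisting of pairs $(\bar{c},e)$ such that the formula $\psi_{\bar{c},e}$ with index $(\Sigma,a,e)$ lies in $\Phi$ and is true of $\bar{c}$ in $\mathcal{A}$; this relation is $\Sigma^0_\alpha$ (here one uses both the complexity of satisfaction in a computable structure and the hypothesis that $\Phi$ is a $\Sigma_\alpha$ family of formulas). The key step is that one can construct a computable sequence of computable $\Sigma_\alpha$ \emph{sentences} $\tau_{\bar{c},e}$, built out of $\top$ and $\bot$, such that $\tau_{\bar{c},e}$ is logically equivalent to $\top$ if $(\bar{c},e)\in R$ and to $\bot$ otherwise: the $\Sigma^0_\alpha$ question is folded into the syntax of a formula whose index is computed outright, so no membership decision about $R$ is ever needed. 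One then sets $\varphi_{\bar{c}}(\bar{x})$ to be the disjunction over \emph{all} $e$ of $\tau_{\bar{c},e}\ \&\ \psi_{\bar{c},e}(\bar{x})$. This is a computably indexed disjunction of computable $\Sigma_\alpha$ formulas, hence computable $\Sigma_\alpha$ with index computable from $\bar{c}$; semantically the disjuncts with $\tau_{\bar{c},e}$ equivalent to $\bot$ contribute nothing, so $\varphi_{\bar{c}}$ is equivalent to the disjunction of exactly those members of $\Phi$ true of $\bar{c}$, each of which defines the orbit of $\bar{c}$. This is the uniformly computable family your construction consumes; with it in hand, your complexity count finishes the proof, but without this (or some equivalent) idea your argument establishes the proposition only for $\alpha=1$.
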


\begin{proof}

We can prove the following.  

\noindent
\textbf{Claim}:  There is a computable function taking each tuple $\bar{c}$ to a computable $\Sigma_\alpha$ formula $\varphi_{\bar{c}}(\bar{x})$ that defines the orbit of $\bar{c}$.

\begin{proof} [Proof of Claim]

Let $a$ be a notation for $\alpha$ (see \cite{AK} for a technical exposition of Kleene's system of notations for computable ordinals).  We may suppose that all elements of $\Phi$ have indices of the form $(\Sigma,a,e)$.  Let $R$ be the $\Sigma_\alpha$ relation consisting of pairs $(\bar{c},e)$ such that the formula $\psi_{\bar{c},e}$ with index $(\Sigma,a,e)$ is an element of $\Phi$ that is true of $\bar{c}$.  We can construct a computable sequence of computable $\Sigma_\alpha$ formulas $\tau_{\bar{c},e}$, defined for all $\bar{c}$ and all $e$, built up out of $\top$ and $\bot$, such that $\tau_{\bar{c},e}$ is logically equivalent to $\top$ if $(\bar{c},e)\in R$ and to $\bot$ otherwise.
We let $\varphi_{\bar{c}}(\bar{x})$ be the disjunction over all $e$, of the formulas $\tau_{\bar{c},e}\ \&\ \psi_{\bar{c},e}(\bar{x})$.     
\end{proof}   

Using the formulas $\varphi_{\bar{c}}(\bar{x})$ from the Claim, we can build a computable $\Pi_{\alpha+1}$ Scott sentence as Scott did.  We take the conjunction of the computable $\Sigma_\alpha$ sentence $\varphi_\emptyset$ and the computable $\Pi_{\alpha+1}$ sentences $\rho_{\bar{a}}$ saying  
\[(\forall\bar{x})[\varphi_{\bar{a}}(\bar{x})\rightarrow ((\forall y)\bigvee_b\varphi_{\bar{a},b}(\bar{x},y)\ \&\ \bigwedge_b(\exists y)\varphi_{\bar{a},b}(\bar{x},y))]\ .\]  \end{proof}                            

Recall that in an example above, we showed that for a computable ordering of type $\omega^\omega$, the orbits of all tuples are defined by computable $\Sigma_{<\omega}$ formulas, but there is no $\Pi_\omega$ Scott sentence.   


\section{Varying the results of A.\ Miller and D.\ Miller} 

A.\ Miller \cite{AMiller} proved that for a countable ordinal $\alpha\geq 2$, if $\mathcal{A}$ has a $\Sigma_\alpha$ Scott sentence and a $\Pi_\alpha$ Scott sentence, then it has one that is $d$-$\Sigma_{<\alpha}$.  We say a little about A.\ Miller's proof.  The case where $\alpha$ is a limit ordinal is trivial; in fact, if $\mathcal{A}$ has a $\Sigma_\alpha$ Scott sentence $\varphi$, then there is a $\Sigma_{<\alpha}$ Scott sentence.  The sentence $\varphi$ is a countable disjunction of formulas $\varphi_i$, each of which is $\Sigma_{<\alpha}$.  One of the disjuncts $\varphi_i$ is true in $\mathcal{A}$, and this is a $\Sigma_{<\alpha}$ Scott sentence.  

The interesting case is as follows.      

\begin{thm} [A.\ Miller]

For a countable ordinal $\alpha\geq 1$, if $\mathcal{A}$ has a Scott sentence that is $\Pi_{\alpha+1}$ and one that is $\Sigma_{\alpha+1}$, then it has one that is $d$-$\Sigma_\alpha$.

\end{thm}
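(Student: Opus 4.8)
The plan is to extract from the two hypotheses a single $\Sigma_\alpha$ condition and a single $\Pi_\alpha$ condition whose conjunction already isolates $\mathcal{A}$ up to isomorphism. First I would exploit the $\Sigma_{\alpha+1}$ Scott sentence. Writing it in normal form as a countable disjunction $\bigvee_i(\exists\bar{u}_i)\theta_i(\bar{u}_i)$ with each $\theta_i$ a $\Pi_\alpha$ formula, I note that $\mathcal{A}$ satisfies some disjunct, say $(\exists\bar{u})\theta(\bar{u})$ with $\theta$ being $\Pi_\alpha$. Since any model of a single disjunct satisfies the whole disjunction, and the disjunction is a Scott sentence, the single sentence $(\exists\bar{u})\theta(\bar{u})$ is itself a (($\Sigma_{\alpha+1}$)) Scott sentence for $\mathcal{A}$. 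I then fix a witnessing tuple $\bar{a}$ in $\mathcal{A}$, so that $\mathcal{A}\models\theta(\bar{a})$.

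Next I would bring in the $\Pi_{\alpha+1}$ Scott sentence through Montalb\'{a}n's theorem (Theorem \ref{Montalban}): since $\mathcal{A}$ has a $\Pi_{\alpha+1}$ Scott sentence, the orbit of $\bar{a}$ is defined by a $\Sigma_\alpha$ formula $\varphi_{\bar{a}}(\bar{x})$. The candidate Scott sentence is then
\[ \chi \ :=\ (\exists\bar{u})\,\varphi_{\bar{a}}(\bar{u}) \ \wedge\ (\forall\bar{u})\big[\varphi_{\bar{a}}(\bar{u})\rightarrow\theta(\bar{u})\big]. \]
The first conjunct is $\Sigma_\alpha$ (an existential quantifier prefixed to a $\Sigma_\alpha$ formula). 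The second conjunct has matrix $neg(\varphi_{\bar{a}})\vee\theta$, a disjunction of two $\Pi_\alpha$ formulas, which is again $\Pi_\alpha$; prefixing the universal quantifier leaves it $\Pi_\alpha$. Hence $\chi$ is $d$-$\Sigma_\alpha$.

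It remains to check that $Mod(\chi)$ is exactly the isomorphism class of $\mathcal{A}$. That $\mathcal{A}\models\chi$ is immediate: $\bar{a}$ witnesses the first conjunct, and every tuple in the orbit of $\bar{a}$ satisfies $\theta$, because automorphisms preserve satisfaction of $L_{\omega_1\omega}$ formulas and $\mathcal{A}\models\theta(\bar{a})$. Conversely, if $\mathcal{M}\models\chi$, the first conjunct yields a tuple $\bar{b}$ with $\mathcal{M}\models\varphi_{\bar{a}}(\bar{b})$, and the second conjunct forces $\mathcal{M}\models\theta(\bar{b})$; thus $\mathcal{M}\models(\exists\bar{u})\theta(\bar{u})$, and since that sentence is a Scott sentence, $\mathcal{M}\cong\mathcal{A}$.

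The genuine content of the argument is Montalb\'{a}n's theorem, which supplies the $\Sigma_\alpha$ orbit formula; everything else is packaging. Accordingly, the step I expect to require the most care is the complexity bookkeeping — confirming that $\varphi_{\bar{a}}\rightarrow\theta$ stays within $\Pi_\alpha$, so that $\chi$ is genuinely $d$-$\Sigma_\alpha$ and not something higher in the hierarchy, which rests on the closure of $\Pi_\alpha$ under finite disjunction — together with the observation in the first step that a single disjunct of the $\Sigma_{\alpha+1}$ Scott sentence is already a Scott sentence. The one apparent subtlety, whether $\varphi_{\bar{a}}$ still defines an orbit in an arbitrary model $\mathcal{M}$, turns out to be irrelevant: in $\mathcal{M}$ I only need the conjunction to produce some tuple satisfying $\theta$.
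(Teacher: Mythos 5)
Your proof is correct, but it is a genuinely different route from the one the paper takes for this theorem. The paper follows A.\ Miller's original argument through D.\ Miller's separation principle: since $\varphi$ (the $\Pi_{\alpha+1}$ Scott sentence) and $neg(\psi)$ (the negation of the $\Sigma_{\alpha+1}$ one, also $\Pi_{\alpha+1}$) axiomatize disjoint classes, D.\ Miller supplies a separator that is a countable union of classes axiomatized by $d$-$\Sigma_\alpha$ sentences; because $Mod(neg(\psi))$ is precisely the complement of $Mod(\varphi)$, the separator must equal $Mod(\varphi)$, and the disjunct true in $\mathcal{A}$ is the desired $d$-$\Sigma_\alpha$ Scott sentence. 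You bypass the separation machinery entirely: you isolate a disjunct $(\exists\bar{u})\theta(\bar{u})$ of the $\Sigma_{\alpha+1}$ Scott sentence true in $\mathcal{A}$, invoke Montalb\'{a}n's theorem (licensed by the $\Pi_{\alpha+1}$ Scott sentence) to get a $\Sigma_\alpha$ definition $\varphi_{\bar{a}}$ of the orbit of a witnessing tuple, and conjoin $(\exists\bar{u})\varphi_{\bar{a}}(\bar{u})$ with $(\forall\bar{u})[\varphi_{\bar{a}}(\bar{u})\rightarrow\theta(\bar{u})]$. Your complexity bookkeeping (closure of $\Pi_\alpha$ under finite disjunction and of $\Sigma_\alpha$ under existential quantification, up to logical equivalence) and both directions of the verification are right, and your closing remark correctly dismisses the only subtlety: $\varphi_{\bar{a}}$ need not define an orbit in an arbitrary model $\mathcal{M}$, since you only need it to produce some tuple satisfying $\theta$. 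What is worth knowing is that your argument is exactly the paper's own Main Lemma in Section 3 --- the direct proof given there for the effective version, Theorem \ref{effectiveAMiller} --- transplanted to the boldface setting; the paper needed that direct argument precisely because D.\ Miller's effective separation theorem was too weak to effectivize the route sketched here. So your approach buys uniformity (the identical proof effectivizes verbatim, replacing Montalb\'{a}n's theorem by Theorem \ref{effectiveMontalban}), while the paper's route via D.\ Miller buys generality, since the separation principle applies to any pair of disjoint $\Pi_{\alpha+1}$-axiomatizable classes, not just a Scott class and its complement.
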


\begin{proof} [Sketch of proof]

A.\ Miller used a result of D.\ Miller \cite{DMiller} saying that for disjoint sets $A,B\subseteq Mod(L)$ both axiomatized by $\Pi_{\alpha+1}$ sentences, there is a separator (i.e., a set containing $A$ and disjoint from $B$) that is a countable union of sets axiomatized by $d$-$\Sigma_\alpha$ sentences.  Suppose that $\mathcal{A}$ has Scott sentences $\varphi$ and $\psi$, where $\varphi$ is $\Pi_{\alpha+1}$ and $\psi$ is $\Sigma_{\alpha+1}$.  Applying the result of D.\ Miller to the disjoint sets $A = Mod(\varphi)$ and $B = Mod(neg(\psi))$, we get a separator which is $Mod(\gamma)$ for some sentence $\gamma$ which is a countable disjunction of $d$-$\Sigma_{\alpha}$ sentences. Since $Mod(neg(\psi))$ is the complement of $Mod(\varphi)$ in $Mod(L)$, $Mod(\gamma) = Mod(\varphi)$; so $\gamma$ is a Scott sentence for $\mathcal{A}$. Thus, $\mathcal{A}$ satisfies one of the disjuncts of $\gamma$, and this is also a Scott sentence for $\mathcal{A}$.
\end{proof}

Our goal is to prove the following.

\begin{thm}
\label{effectiveAMiller}

For a computable ordinal $\alpha\geq 2$, if $\mathcal{A}$ has a Scott sentence that is computable $\Pi_\alpha$ and one that is computable $\Sigma_\alpha$, then there is one that is computable $d$-$\Sigma_{<\alpha}$.   

\end{thm}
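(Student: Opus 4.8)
The plan is to give a direct construction that bypasses D.~Miller's separation theorem entirely, assembling the desired Scott sentence from two ingredients: the computable $\Sigma_{<\alpha}$ orbit-defining formulas supplied by the effective Montalb\'an theorem (Theorem \ref{effectiveMontalban}), and a single existential disjunct extracted from the given computable $\Sigma_\alpha$ Scott sentence. Since $\mathcal{A}$ has a computable $\Pi_\alpha$ Scott sentence, Theorem \ref{effectiveMontalban} gives, for every tuple $\bar{a}$, a computable $\Sigma_{<\alpha}$ formula $\varphi_{\bar{a}}(\bar{x})$ defining the orbit of $\bar{a}$.

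Next I would write the given computable $\Sigma_\alpha$ Scott sentence as $\psi = \bigvee_i (\exists\bar{u}_i)\theta_i(\bar{u}_i)$ with each $\theta_i$ computable $\Pi_{\beta_i}$, $\beta_i<\alpha$. Because $\psi$ is a Scott sentence and $\mathcal{A}\models\psi$, some disjunct is realized: fix $i_0$ and a tuple $\bar{a}$ with $\mathcal{A}\models\theta_{i_0}(\bar{a})$. As in the treatment of the limit case in the sketch of A.~Miller's theorem, the one-disjunct sentence $(\exists\bar{x})\theta_{i_0}(\bar{x})$ is itself a Scott sentence for $\mathcal{A}$: it holds in $\mathcal{A}$, and any model of it satisfies $\psi$ and is therefore isomorphic to $\mathcal{A}$. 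Its only defect is complexity, being $\exists$ over a $\Pi_{\beta_{i_0}}$ matrix and hence merely $\Sigma_\alpha$.

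The key step is to lower this complexity using the orbit formula for the \emph{same} tuple $\bar{a}$. I propose the sentence
\[
\chi \ :=\ (\exists\bar{x})\,\varphi_{\bar{a}}(\bar{x})\ \wedge\ (\forall\bar{x})\bigl[\varphi_{\bar{a}}(\bar{x})\rightarrow\theta_{i_0}(\bar{x})\bigr].
\]
I would check it is a Scott sentence: $\mathcal{A}\models\chi$ since $\bar{a}$ witnesses the first conjunct and every solution of $\varphi_{\bar{a}}$ lies in the orbit of $\bar{a}$, hence satisfies $\theta_{i_0}$ by invariance; conversely, if $\mathcal{B}\models\chi$ then a witness $\bar{b}$ of the first conjunct satisfies $\theta_{i_0}$ by the second, so $\mathcal{B}\models(\exists\bar{x})\theta_{i_0}(\bar{x})$ and thus $\mathcal{B}\cong\mathcal{A}$. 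For the complexity, with $\varphi_{\bar{a}}$ computable $\Sigma_{\beta'}$ and $\theta_{i_0}$ computable $\Pi_{\beta_{i_0}}$, set $\delta=\max(\beta',\beta_{i_0},1)<\alpha$: the first conjunct is computable $\Sigma_\delta$, and the second, namely $(\forall\bar{x})[neg(\varphi_{\bar{a}})(\bar{x})\vee\theta_{i_0}(\bar{x})]$, is computable $\Pi_\delta$, using that $\Pi_\delta$ is closed under finite disjunction and universal quantification. Hence $\chi$ is computable $d$-$\Sigma_\delta$, i.e.\ computable $d$-$\Sigma_{<\alpha}$. This single construction disposes of the successor and limit cases at once.

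I expect the main point to be a matter of effectivity bookkeeping rather than a genuine obstacle. We cannot find the realized disjunct $i_0$, the tuple $\bar{a}$, or the orbit formula $\varphi_{\bar{a}}$ effectively (indeed $\mathcal{A}$ need not be computable), but the theorem asks only for the \emph{existence} of a computable $d$-$\Sigma_{<\alpha}$ Scott sentence; since $\theta_{i_0}$ and $\varphi_{\bar{a}}$ are each computable infinitary formulas, $\chi$ is a specific computable formula of the required complexity, and the non-uniformity is harmless. The one calculation to carry out with care is the closure argument showing that $\varphi_{\bar{a}}\rightarrow\theta_{i_0}$ remains $\Pi_\delta$ after the universal quantifier; everything else reduces to the two facts above.
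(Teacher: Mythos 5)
Your proposal is correct and is essentially the paper's own argument: the paper's Main Lemma likewise bypasses D.~Miller's separation theorem by fixing a realized disjunct $\varphi_i(\bar{a})$ of the computable $\Sigma_\alpha$ Scott sentence, invoking Theorem \ref{effectiveMontalban} to get a computable $\Sigma_{<\alpha}$ orbit formula $\gamma$ for $\bar{a}$, and taking the Scott sentence $(\exists\bar{u})\gamma(\bar{u})\ \wedge\ (\forall\bar{u})(\gamma(\bar{u})\rightarrow\varphi_i(\bar{u}))$, exactly your $\chi$. The only cosmetic difference is that the paper dispatches the limit case separately (a single $\Sigma_{<\alpha}$ disjunct already serves as a Scott sentence there), while your single construction covers both cases at once.
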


Again, the case where $\alpha$ is a limit ordinal is trivial. We want to prove that if $\mathcal{A}$ has one Scott sentence that is computable $\Pi_{\alpha+1}$ and another that is computable $\Sigma_{\alpha+1}$, then there is one that is computable $d$-$\Sigma_\alpha$.

D.\ Miller gave an effective version of his separation theorem, saying that if $A$ and $B$ are disjoint subsets of $Mod(L)$, axiomatized by $\Pi_{\alpha+1}$ sentences in the admissible fragment $L_{\omega_1^{CK}}$, then there is a separator that is a disjoint union of sets axiomatized by $d$-$\Sigma_\alpha$ formulas in $L_{\omega_1^{CK}}$.  This is not good enough for our purposes.  We give a direct proof of the following.

\begin{lem} [Main Lemma]  

Let $\alpha\geq 1$ be a computable ordinal.  If $\mathcal{A}$ has a computable $\Sigma_{\alpha+1}$ Scott sentence $\varphi$ and a computable $\Pi_{\alpha+1}$ Scott sentence $\psi$, then there is a computable $d$-$\Sigma_\alpha$ Scott sentence.

\end{lem}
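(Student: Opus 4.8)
The plan is to write the Scott sentence down explicitly, using the two hypotheses for complementary purposes: the computable $\Pi_{\alpha+1}$ Scott sentence to define orbits cheaply, and the computable $\Sigma_{\alpha+1}$ Scott sentence to supply a single existential assertion that already determines $\mathcal{A}$ up to isomorphism. First, applying Theorem \ref{effectiveMontalban} to the computable $\Pi_{\alpha+1}$ Scott sentence $\psi$ (note $\alpha+1\geq 2$), the orbit of each tuple is defined by a computable $\Sigma_{<\alpha+1}$ formula; since any computable $\Sigma_\beta$ formula with $\beta\leq\alpha$ is equivalent to a computable $\Sigma_\alpha$ formula, each orbit is in fact defined by a computable $\Sigma_\alpha$ formula. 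Next, write $\varphi=\bigvee_i(\exists\bar u_i)\varphi_i(\bar u_i)$ with each $\varphi_i$ computable $\Pi_\alpha$. Since $\mathcal{A}\models\varphi$, I would fix $i_0$ and a tuple $\bar a$ with $\mathcal{A}\models\varphi_{i_0}(\bar a)$, and set $\theta=\varphi_{i_0}$, a computable $\Pi_\alpha$ formula with $\theta(\bar a)$ true. Because $(\exists\bar x)\theta(\bar x)$ is a disjunct of $\varphi$, every model of it is a model of the Scott sentence $\varphi$, so $(\exists\bar x)\theta(\bar x)$ is itself a Scott sentence for $\mathcal{A}$ --- but it is only $\Sigma_{\alpha+1}$, which is why more care is needed.

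\textbf{The construction.} Let $\varphi_{\bar a}(\bar x)$ be the computable $\Sigma_\alpha$ formula defining the orbit of $\bar a$, and set
\[
\chi\ :=\ (\exists\bar x)\varphi_{\bar a}(\bar x)\ \ \&\ \ (\forall\bar x)[\varphi_{\bar a}(\bar x)\rightarrow\theta(\bar x)]\ .
\]
The claim is that $\chi$ is a computable $d$-$\Sigma_\alpha$ Scott sentence. For the complexity, the first conjunct is an existential quantifier over a computable $\Sigma_\alpha$ formula, hence computable $\Sigma_\alpha$; in the second, $\varphi_{\bar a}(\bar x)\rightarrow\theta(\bar x)$ is equivalent to $neg(\varphi_{\bar a}(\bar x))\vee\theta(\bar x)$, a disjunction of two computable $\Pi_\alpha$ formulas and so computable $\Pi_\alpha$, and a universal quantifier keeps it $\Pi_\alpha$. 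Thus $\chi$ is computable $d$-$\Sigma_\alpha$. For the Scott property, $\mathcal{A}\models\chi$ because $\bar a$ witnesses the first conjunct, while the second holds since $\varphi_{\bar a}$ defines the orbit of $\bar a$ and $\theta$, being an $L_{\omega_1\omega}$ formula true of $\bar a$, is true of every tuple in that orbit. Conversely, if $\mathcal{B}\models\chi$, then the first conjunct gives some $\bar b$ in $\mathcal{B}$ with $\varphi_{\bar a}(\bar b)$, and the second conjunct forces $\mathcal{B}\models\theta(\bar b)$; hence $\mathcal{B}\models(\exists\bar x)\theta(\bar x)$, and since $(\exists\bar x)\theta$ is a Scott sentence, $\mathcal{B}\cong\mathcal{A}$. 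Note that $\varphi_{\bar a}$ need not define an orbit in $\mathcal{B}$; it is used only to produce a realization that the implication then pushes into $\theta$.

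\textbf{The main point and the obstacle.} The device that makes this work is the replacement, inside a single existential block, of the $\Pi_\alpha$-witness $\theta$ by the lower-complexity orbit definition $\varphi_{\bar a}$ --- this is exactly what drops the existential part from $\Sigma_{\alpha+1}$ to $\Sigma_\alpha$ --- followed by the $\Pi_\alpha$ implication that reimposes $\theta$ and restores the full Scott property. These two demands pull in opposite directions, and reconciling them in a single sentence is the crux. It is also what distinguishes the argument from the route through D.\ Miller's separation theorem, which only produces a countable disjunction of $d$-$\Sigma_\alpha$ sentences, where selecting the true disjunct is not effective. By contrast, the only non-constructive choices here --- the index $i_0$, the witnessing tuple $\bar a$, and an index for $\varphi_{\bar a}$ --- are harmless, since the Lemma asserts only the existence of one computable $d$-$\Sigma_\alpha$ Scott sentence, and $\chi$ is visibly such a sentence; no uniform procedure to find it is required. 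In particular $\mathcal{A}$ need not be computable, so we do not need $\varphi_{\bar a}$ uniformly in $\bar a$: we need only that this one orbit is defined by some computable $\Sigma_\alpha$ formula, which Theorem \ref{effectiveMontalban} supplies.
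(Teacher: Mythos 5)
Your proposal is correct and is essentially the paper's own proof: the paper likewise fixes a true disjunct $\varphi_i(\bar{a})$ of the computable $\Sigma_{\alpha+1}$ Scott sentence, applies Theorem \ref{effectiveMontalban} (to the computable $\Pi_{\alpha+1}$ Scott sentence) to get a computable $\Sigma_\alpha$ formula $\gamma$ defining the orbit of $\bar{a}$, and takes $(\exists\bar{u})\gamma(\bar{u})\ \&\ (\forall\bar{u})(\gamma(\bar{u})\rightarrow\varphi_i(\bar{u}))$ as the Scott sentence. The only difference is expository: the paper leaves the verification of the Scott property implicit, whereas you spell it out (via the observation that $(\exists\bar{x})\varphi_i(\bar{x})$ is itself a Scott sentence).
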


\begin{proof}

The sentence $\varphi$ has the form $\bigvee_{i\in W} (\exists\bar{u}_i)\varphi_i(\bar{u}_i)$, where each $\varphi_i$ is computable $\Pi_\alpha$, and $W$ is a c.e. set.  For some $i$ and some $\bar{a}$, we have $\mathcal{A}\models\varphi_i(\bar{a})$.  By Theorem \ref{effectiveMontalban}, the orbit of $\bar{a}$ is defined by a computable $\Sigma_\alpha$ formula $\gamma(\bar{u})$.  Note that $(\exists\bar{u})\gamma(\bar{u})$ is computable $\Sigma_\alpha$, and $(\forall\bar{u})(\gamma(\bar{u})\rightarrow\varphi_i(\bar{u}))$ is logically equivalent to a computable $\Pi_\alpha$ sentence.  The conjunction of these is a Scott sentence 
for~$\mathcal{A}$.  
\end{proof}

Below, we give an effective version of D.\ Miller's result, which would suffice to prove Theorem \ref{effectiveAMiller}.    
\begin{thm}
\label{effectiveDMiller}

Let $L$ be a computable language.  For a computable ordinal $\alpha\geq 2$, suppose $A$ and $B$ are disjoint subsets of $Mod(L)$ axiomatized by computable $\Pi_{\alpha}$ sentences.  Then then there is a separator that is the union of a countable family of sets each of which is axiomatized by a computable $d$-$\Sigma_{< \alpha}$ sentence.

\end{thm}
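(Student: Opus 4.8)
The plan is to let the family of all computable $d$-$\Sigma_{<\alpha}$ sentences that are refuted throughout $B$ do the separating, and to reduce the whole theorem to a single local statement about each $\mathcal{M}\in A$. Write $A = Mod(\varphi_A)$ and $B = Mod(\varphi_B)$ with $\varphi_A,\varphi_B$ computable $\Pi_\alpha$, and set
\[ S = \bigcup \{ Mod(\theta) : \theta \text{ is a computable } d\text{-}\Sigma_{<\alpha}\text{ sentence with } Mod(\theta)\cap B = \emptyset \}. \]
Since there are only countably many computable $d$-$\Sigma_{<\alpha}$ sentences, $S$ is a countable union of sets of the required form, and $S\cap B=\emptyset$ holds by construction. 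Hence the entire content of the theorem is the inclusion $A\subseteq S$: for every $\mathcal{M}\in A$ there must exist a computable $d$-$\Sigma_{<\alpha}$ sentence $\theta$ with $\mathcal{M}\models\theta$ and $Mod(\theta)\cap B=\emptyset$. Note that the family indexing $S$ need not be uniformly c.e., which conveniently sidesteps any effectivity worry about deciding ``$Mod(\theta)\cap B=\emptyset$''; the effective content is only that each piece is a computable $d$-$\Sigma_{<\alpha}$ sentence.

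I would establish this inclusion by contradiction, using the consistency-property machinery of Section 2. Suppose some $\mathcal{M}\in A$ admits no such $\theta$; that is, every computable $d$-$\Sigma_{<\alpha}$ sentence true in $\mathcal{M}$ has a model in $B$. I would then run a Henkin construction, exactly in the style of the proof of Theorem \ref{MontalbanVariant}, to build a countable structure $\mathcal{N}$ that is simultaneously a model of $\varphi_B$ and matched to $\mathcal{M}$ by a back-and-forth family at all levels $\beta<\alpha$. The consistency property $\mathcal{C}$ consists of finite sets of computable $\Sigma_{<\alpha}$ and $\Pi_{<\alpha}$ sentences in $L\cup C$ that are (i) satisfiable in some structure of $B$, while (ii) each constant tuple $\bar{c}$ appearing is tied to a tuple $\bar{a}$ of $\mathcal{M}$ whose computable $\Sigma_{<\alpha}$-type is transferred to $\bar{c}$; a seventh clause, as in Section 2, forces $\varphi_B$ to be witnessed. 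The resulting $\mathcal{N}$ then lies in $B$, while the level-$(<\alpha)$ matching with $\mathcal{M}$ guarantees that every tuple of $\mathcal{N}$ satisfies the $\Sigma_{<\alpha}$ conjuncts of $\varphi_A$, so that $\mathcal{N}\models\varphi_A$ and $\mathcal{N}\in A$. This contradicts $A\cap B=\emptyset$.

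The role of the failure hypothesis --- and the reason the difference classes $d$-$\Sigma_{<\alpha}$, rather than plain $\Sigma_{<\alpha}$ or $\Pi_{<\alpha}$, are exactly what is needed --- appears when verifying that $\mathcal{C}$ is nonempty and that each extension clause can be met. At a finite stage the commitments made so far amount to asserting the existence of witnesses with prescribed $\Sigma_{<\alpha}$ properties (matching some $\bar{a}'\supseteq\bar{a}$ in $\mathcal{M}$) together with the preservation of prescribed $\Pi_{<\alpha}$ universal constraints; the conjunction of the existential closure of the former with the latter is precisely a computable $d$-$\Sigma_{<\alpha}$ sentence, and it is true in $\mathcal{M}$. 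By the failure hypothesis it therefore has a model in $B$, which is exactly what keeps $\mathcal{C}$ nonempty and allows the next extension. The main obstacle I anticipate is the bookkeeping of this matching: one must maintain the transfer of $\Sigma_{<\alpha}$-types from $\mathcal{M}$ to $\mathcal{N}$ across all the extension steps (in the Ash--Knight manner, via the paired back-and-forth relations for $\beta<\alpha$) while never violating the accumulating universal ($\Pi_{<\alpha}$) constraints, and then to check that the limit structure genuinely satisfies the $\Pi_\alpha$ sentence $\varphi_A$. The successor case $\alpha=\gamma+1$ (matching D.\ Miller's original statement, with $d$-$\Sigma_\gamma$) is the principal one; the limit case runs through the same argument, since every level invoked lies below $\alpha$.
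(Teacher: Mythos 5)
Your opening reduction is the same as the paper's: the separator is a countable union of classes $Mod(\theta)$ for computable $d$-$\Sigma_{<\alpha}$ sentences $\theta$ disjoint from $B$, and the whole theorem comes down to showing every $\mathcal{M}\in A$ satisfies such a $\theta$. The gap is in your proof of that local statement. To make the limit structure $\mathcal{N}$ a model of $\varphi_B$, your consistency property must at some stage contain commitments of the form $\theta_{i,j}(\bar{c},\bar{d})$, fixing a particular disjunct $j$ that witnesses a conjunct of $\varphi_B$ at $\bar{c}$. But such sentences are in general \emph{false} in $\mathcal{M}$ (of the tied tuples, or indeed of the relevant tuples at all), since $\mathcal{M}\in A$ and $A\cap B=\emptyset$ force $\mathcal{M}\not\models\varphi_B$. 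So your key claim --- that at each finite stage the accumulated commitments amount to a computable $d$-$\Sigma_{<\alpha}$ sentence \emph{true in $\mathcal{M}$}, to which the failure hypothesis can be applied --- breaks down as soon as the first $\varphi_B$-witnessing commitment is made. The failure hypothesis only yields $B$-models of sentences true in $\mathcal{M}$: it gives a structure $\mathcal{B}'\in B$ realizing the transferred $\Sigma_{<\alpha}$ facts, but no control over whether $\mathcal{B}'$ also satisfies the previously chosen $\theta_{i,j}(\bar{c},\bar{d})$. Concretely, when you witness $\psi_i$ at $\bar{c}$ you must commit to one $j$; each later transfer of a $\Sigma_{<\alpha}$ fact from $\mathcal{M}$ requires some member of $B$ realizing the enlarged transfer set together with \emph{that same} $j$; for each finite transfer set some $j$ works, but these sets of good $j$'s shrink as transfers accumulate, and no compactness argument produces a single $j$ good for all of them. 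The same difficulty recurs when the fresh Henkin witnesses $\bar{d}$ must themselves be tied to tuples of $\mathcal{M}$, which you need because the conjuncts of $\varphi_A$ are universally quantified and every tuple of $\mathcal{N}$ must inherit a type from $\mathcal{M}$.

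The paper's proof avoids all of this by never attempting to build a model of $\varphi_B$. It uses the consistency property $\mathcal{C}_{\mathcal{M}}$ of finite sets of computable $\Sigma_\beta$/$\Pi_\beta$ sentences ($\beta+1<\alpha$) satisfiable \emph{inside the single fixed structure} $\mathcal{M}$, so joint satisfiability of all commitments is automatic. This $\mathcal{C}_{\mathcal{M}}$ satisfies the witnessing clause for $\varphi_A$; if it also satisfied the witnessing clause for $\varphi_B$, one chain would produce a model of $\varphi_A\wedge\varphi_B$, contradicting disjointness. Hence the $\varphi_B$-clause fails at some finite $S$, $i$, $\bar{c}$, and that finite failure witness \emph{is} the separating sentence: with $\rho(\bar{u}_i,\bar{x})$ the conjunction of $S$ and the appropriate equality/distinctness conditions, the conjunction of $(\exists\bar{u}_i\bar{x})\rho$ with $(\forall\bar{u}_i)[(\exists\bar{x})\rho\rightarrow\bigwedge_{j}(\forall\bar{v}_{i,j})neg(\theta_{i,j}(\bar{u}_i,\bar{v}_{i,j}))]$ is computable $d$-$\Sigma_{<\alpha}$, true in $\mathcal{M}$, and false in every model of $\varphi_B$. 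If you want to keep your contradiction framing you may, but the Henkin argument must run inside $\mathcal{M}$ itself, not across varying members of $B$; as written, your construction cannot get past its first $\varphi_B$-witnessing step.
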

       
\begin{proof}

Let $A = Mod(\varphi)$, and let $B = Mod(\psi)$, where $\varphi$ and $\psi$ are computable $\Pi_\alpha$ sentences.  Say that
$\varphi = \bigwedge_{i \in W} (\forall\bar{u}_i)\varphi_i(\bar{u}_i)$, where $W$ is a c.e. set, and each $\varphi_i(\bar{u}_i)$ has the form 
$\bigvee_{j \in W_{e_i}}(\exists\bar{v}_{i,j}) \xi_{i,j}(\bar{u}_i,\bar{v}_{i,j})$, where each $\xi_{i,j}$ is $\Pi_{\beta}$ for some $\beta$ such that $\beta+1 < \alpha$, and each $W_{e_i}$ is a c.e. set whose index $e_i$ (according to some canonical indexing of the c.e. sets) is determined computably from the index $i$ of $\varphi_i$.  

Let $C$ be an infinite computable set of new Henkin constants, corresponding to the natural numbers.  For each $\mathcal{A}\in A$, let $\mathcal{C}_{\mathcal{A}}$ be the consistency property consisting of the finite sets $S$ of sentences in the language $L\cup C$, each computable $\Sigma_\beta$ or $\Pi_\beta$ for some $\beta$ such that $\beta+1 <\alpha$ (and each, recall, obtained by substituting constants from $C$ for the free variables in a computable $L_{\omega_1\omega}$ formula in normal form), where some interpretation of the constants from $C$ appearing in the sentences of $S$, mapping distinct constants to distinct elements of $\mathcal{A}$, makes all of these sentences true.  Recall the special computable $\Pi_\alpha$ sentence $\varphi$ and its sub-formulas, $\varphi_i$ and $\xi_{i,j}$.  The set $\mathcal{C}_{\mathcal{A}}$ is a consistency property $\mathcal{C}$ satisfying the following condition:   

\begin{itemize}[label=$\star$] 
\item  For each $S\in\mathcal{C}$, for each $i \in W$ and each appropriate $\bar{c}$, there exist $j \in W_{e_i}$ and an appropriate $\bar{d}$ such that for some $S'\supseteq S$ in $\mathcal{C}$, $\xi_{i,j}(\bar{c},\bar{d})\in S'$.
\end{itemize}

For any consistency property $\mathcal{C}$ satisfying $\star$, there is a chain $(S_n)_{n\in\omega}$ of elements of $\mathcal{C}$ such that $\{S_n:n\in\omega\}$ is also a consistency property satisfying $\star$.  For any such chain, the resulting structure is a model of $\varphi$.    

We now consider the other special computable $\Pi_\alpha$ sentence $\psi$.  Say that $\psi = \bigwedge_{i \in W'}(\forall\bar{u}_i)\psi_i(\bar{u}_i)$, where $W'$ is a c.e. set, and for each $i$, $\psi_i(\bar{u}_i) = \bigvee_{j \in W_{e_i}}(\exists\bar{v}_j)\theta_{i,j}(\bar{u}_i,\bar{v}_j)$, where $\theta_{i,j}$ is $\Pi_\beta$ for some $\beta$ such that $\beta+1 < \alpha$, and each $W_{e_i}$ is a c.e. set whose index $e_i$ (according to some canonical indexing of the c.e. sets) is determined computably by the index $i$ of $\varphi_i$.  Since $A = Mod(\varphi)$ and $B = Mod(\psi)$ are disjoint subsets of $Mod(L)$, for any $\mathcal{A}$ satisfying $\varphi$, $\mathcal{C}_{\mathcal{A}}$ cannot satisfy the following added condition, which would witness the truth of $\psi$:

\begin{itemize}[label=$\star \star$] 

\item  For each $S\in\mathcal{C}$, for all $i \in W'$ and all $\bar{c}$ appropriate for $\bar{u}_i$, there exists $S'\supseteq S$ such that for some $j \in W_{e_i}$ and $\bar{d}$, we have $\theta_{i,j}(\bar{c},\bar{d})\in S'$.  

\end{itemize}

It follows that there must exist $S\in\mathcal{C}_{\mathcal{A}}$, and some $i$ and $\bar{c}$ appropriate for $\bar{u}_i$, such that for all $j$ in the c.e.\ set $W_{e_i}$ and all $\bar{d}$, $S\cup\{\theta_{i,j}(\bar{c},\bar{d})\}$ is not satisfied by any assignment in $\mathcal{A}$.  Let $\bar{c}'$ be the tuple of constants from $C$, other than $\bar{c}$, that appear in $S$, and let $\chi(\bar{c},\bar{c}')$ be the conjunction of $S$ and sentences expressing that the tuples $\bar{c}, \bar{c}'$ are disjoint and the elements of $\bar{c}'$ are distinct. 

Now, although $\bar{c}$ is appropriate for the tuple of variables $\bar{u}_i$, it might be that the tuple $\bar{c}$ assigns the same constant to multiple variables; i.e, the tuple $\bar{c}$ could list the same constant multiple times. Therefore, to define $\chi(\bar{u}_i, \bar{x})$ unambiguously, for a given constant in $\bar{c}$, substitute the variable from $\bar{u}_i$ with least index to which this element of $\bar{c}$ was assigned.  Finally, let $\rho(\bar{u}_i, \bar{x})$ be the conjunction of $\chi(\bar{u}_i, \bar{x})$ and formulas that express the pairwise equality of any elements of $\bar{u}_i$ to which the same constant in $\bar{c}$ was assigned, and the pairwise inequality of any elements of $\bar{u}_i$ to which different constants in $\bar{c}$ were assigned.  Then 
\[\mathcal{A}\models (\forall\bar{u}_i)[((\exists\bar{x})\rho(\bar{u}_i,\bar{x})) \rightarrow (\bigwedge_{j \in W_{e_i}}(\forall\bar{v}_{i,j})(neg(\theta_{i,j}(\bar{u}_i,\bar{v}_{i,j}))))]\] .

\noindent Note that $(\exists\bar{u}_i \bar{x})\rho(\bar{u}_i,\bar{x})$ is computable $\Sigma_{<\alpha}$, and \[(\forall\bar{u}_i)[((\exists\bar{x})\rho(\bar{u}_i,\bar{x})) \rightarrow (\bigwedge_{j \in W_{e_i}}(\forall\bar{v}_{i,j})(neg(\theta_{i,j}(\bar{u}_i,\bar{v}_{i,j}))))]\] is logically equivalent to a computable $\Pi_{<\alpha}$ sentence.  Both sentences are true in $\mathcal{A}$.  They cannot both be true in any model of $\psi$, for then there would be a tuple satisfying $neg(\psi_i(\bar{u}_i))$.  The conjunction gives a computable $d$-$\Sigma_{<\alpha}$ sentence that is true in $\mathcal{A}$ and not true in any model of $\psi$.  Let $M_{\mathcal{A}}$ be the class of models for this sentence. As our separator, we take the union of the sets $M_\mathcal{A}$.  While there may be uncountably many models $\mathcal{A}$ of $\varphi$, there are only countably many pairs of computable infinitary sentences. Hence, our separator is the union of a countable family of sets $S_{\mathcal{A}}$.  
\end{proof} 

\section{Finitely generated groups}       

Knight and Saraph \cite{KS} observed that every computable finitely generated group has a computable $\Sigma_3$ Scott sentence.  However, for many kinds of computable finitely generated groups, there is a computable $d$-$\Sigma_2$ Scott sentence.  In particular, this is so for finitely generated free groups \cite{Free}, finitely generated Abelian groups, the infinite dihedral group of rank $2$  \cite{KS}, further variants of the dihedral group \cite{Raz}, polycyclic groups, and certain groups of interest in geometric group theory (lamplighter and Baumslag-Solitar groups) \cite{Ho}.  
Based on the known examples, Ho and Knight had conjectured that every computable finitely generated group has a computable $d$-$\Sigma_2$ Scott sentence.  Knight also conjectured that every finitely generated group (not necessarily computable) has a $d$-$\Sigma_2$ Scott sentence (not necessarily computable $d$-$\Sigma_2$).  
Recently, Harrison-Trainor and Ho \cite{HH} gave an example of a computable finitely generated group that does not have a $d$-$\Sigma_2$ Scott sentence, thereby disproving both conjectures.     

Here we give necessary and sufficient conditions for a finitely generated group to have a $d$-$\Sigma_2$ Scott sentence.  We also give necessary and sufficient conditions for a computable finitely generated group to have a computable $d$-$\Sigma_2$ Scott sentence.  We show that for a finitely generated group, there is a $d$-$\Sigma_2$ Scott sentence iff for some generating tuple, the orbit is defined by a $\Pi_1$ formula iff for each generating tuple, the orbit is defined by a $\Pi_1$ formula.  For a computable finitely generated group, there is a computable $d$-$\Sigma_2$ Scott sentence iff for some generating tuple, the orbit is defined by a computable $\Pi_1$ formula iff for each generating tuple, the orbit is defined by a computable $\Pi_1$ formula.      
\subsection{Finitely generated groups with a $d$-$\Sigma_2$ Scott sentence}

In \cite{KS}, it is observed that a computable finitely generated group has a computable $\Sigma_3$ Scott sentence.  
Throughout the rest of this section, we use the notation $\langle\bar{x}\rangle \cong \langle\bar{y}\rangle$ to represent the computable $\Pi_1$ formula that says $\bar{x}$ and $\bar{y}$ satisfy the exact same relators and non-relators.  Note that if $\bar{a}$ is a fixed tuple of a group $G$, then the set of relators and non-relators satisfied by $\bar{a}$ is a set computable in $G$.  The notation $\langle\bar{x}\rangle \cong \langle\bar{a}\rangle$ represents the conjunction of the formulas of the forms $\bar{w}(\bar{x}) = 1$ and $\bar{w}(\bar{x})\not= 1$ that are true of $\bar{a}$ in $\mathcal{A}$.  This formula should not be thought of as including $\bar{a}$ as parameters; instead, it represents a $\Pi_1$ formula in~$\bar{x}$.

\begin{prop}

Every finitely generated group has a $\Sigma_3$ Scott sentence.  

\end{prop}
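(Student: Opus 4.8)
The plan is to fix a finite generating tuple $\bar{a}$ for $G$ and write down an explicit $\Sigma_3$ sentence asserting the existence of a generating tuple with the same relator type as $\bar{a}$. Concretely, I would take the sentence
\[
\sigma:\ (\exists\bar{x})\Big[\langle\bar{x}\rangle\cong\langle\bar{a}\rangle\ \wedge\ (\forall y)\bigvee_{\bar{w}} y=\bar{w}(\bar{x})\Big],
\]
where $\bar{w}$ ranges over all group words in $|\bar{a}|$ variables and $\langle\bar{x}\rangle\cong\langle\bar{a}\rangle$ is the $\Pi_1$ formula fixed above. The first conjunct records exactly which words in $\bar{x}$ are relators and which are non-relators of $\bar{a}$; the second conjunct says that every element of the model is a word in $\bar{x}$, i.e.\ that $\bar{x}$ generates.

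First I would check the complexity. By the convention preceding the statement, $\langle\bar{x}\rangle\cong\langle\bar{a}\rangle$ is $\Pi_1$. Each formula $y=\bar{w}(\bar{x})$ is atomic, so the disjunction $\bigvee_{\bar{w}} y=\bar{w}(\bar{x})$ is $\Sigma_1$, and hence $(\forall y)\bigvee_{\bar{w}} y=\bar{w}(\bar{x})$ is $\Pi_2$. The conjunction of a $\Pi_1$ and a $\Pi_2$ formula is $\Pi_2$, so the matrix inside the brackets is $\Pi_2$, and prefixing the block quantifier $(\exists\bar{x})$ yields a $\Sigma_3$ sentence, as required.

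Next I would verify that $\sigma$ is a Scott sentence for $G$. That $G\models\sigma$ is witnessed by $\bar{a}$ itself: it trivially satisfies $\langle\bar{x}\rangle\cong\langle\bar{a}\rangle$, and since $\bar{a}$ generates $G$, every element of $G$ is some $\bar{w}(\bar{a})$. For the converse, suppose $H\models\sigma$, as witnessed by a tuple $\bar{b}$. Then $\bar{b}$ generates $H$ (by the second conjunct) and satisfies exactly the same relators and non-relators as $\bar{a}$ (by the first). I would then argue that the assignment $\bar{w}(\bar{a})\mapsto\bar{w}(\bar{b})$ is a well-defined isomorphism: it is well-defined and injective because $\bar{w}(\bar{a})=\bar{w}'(\bar{a})$ holds iff the relator $\bar{w}(\bar{x})\bar{w}'(\bar{x})^{-1}=1$ is satisfied, and $\bar{a}$ and $\bar{b}$ satisfy precisely the same relators; it is surjective because $\bar{b}$ generates $H$; and it is clearly a homomorphism. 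Hence $H\cong G$.

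The main point to be careful about is the well-definedness of this map, which is exactly the content of the first conjunct: capturing the full atomic (relator/non-relator) type of a generating tuple is what pins down the isomorphism type once we also know the tuple generates. The complexity bookkeeping, though routine, is the only other thing to confirm, and the count above shows that $\Sigma_3$ is attained.
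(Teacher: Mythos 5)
Your proposal is correct and takes essentially the same approach as the paper: the paper's proof writes down exactly the same sentence $(\exists \bar{x})[\langle\bar{x}\rangle\cong \langle\bar{a}\rangle\ \&\ (\forall y)\bigvee_w w(\bar{x}) = y]$, citing Knight--Saraph. Your complexity count and the verification that $\bar{w}(\bar{a})\mapsto\bar{w}(\bar{b})$ is a well-defined isomorphism simply spell out the details the paper leaves implicit.
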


\begin{proof}

Let $G$ be a group with generating tuple $\bar{a}$.  As in \cite{KS}, we get a Scott sentence saying that $(\exists \bar{x})[\langle\bar{x}\rangle\cong \langle\bar{a}\rangle\ \&\ (\forall y)\bigvee_w w(\bar{x}) = y]$.
\end{proof}

We can prove the following.       

\begin{thm}
\label{characterization}

For a finitely generated group $G$, the following are equivalent:

\begin{enumerate}

\item  $G$ has a $\Pi_3$ Scott sentence,

\item  $G$ has a $d$-$\Sigma_2$ Scott sentence

\item  for some generating tuple, the orbit is defined by a $\Pi_1$ formula

\item  for each generating tuple, the orbit is defined by a $\Pi_1$ formula. 

\end{enumerate} 

\end{thm}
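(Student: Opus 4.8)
The plan is to prove the cycle of implications $(1)\Rightarrow(2)\Rightarrow(3)\Rightarrow(4)\Rightarrow(1)$, using the results already established together with the structural facts specific to finitely generated groups. Several of the steps are quite short once the right tool is invoked; the real work lies in the passage from a Scott sentence of bounded complexity to a $\Pi_1$ definition of a generating orbit.

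\medskip\noindent\textbf{The easy implications.} First I would dispose of $(2)\Rightarrow(1)$: a $d$-$\Sigma_2$ sentence is in particular $\Pi_3$, since $d$-$\Sigma_2 = \Sigma_2\wedge\Pi_2$, and both $\Sigma_2$ and $\Pi_2$ are $\Pi_3$ (a $\Sigma_2$ sentence is trivially $\Pi_3$ by adding a vacuous universal quantifier block, and $\Pi_2\subseteq\Pi_3$). Next, $(4)\Rightarrow(3)$ is immediate, since every finitely generated group has at least one generating tuple. The implication $(3)\Rightarrow(4)$ should follow from the homogeneity inherent in a finitely generated group: if one generating tuple $\bar{a}$ has a $\Pi_1$-definable orbit, and $\bar{b}$ is any other generating tuple, then each generator in $\bar{b}$ is a word in $\bar{a}$ and vice versa, so the orbit of $\bar{b}$ is obtained from that of $\bar{a}$ by a fixed interpretation of words; I would show that substituting these words into the $\Pi_1$ formula for $\bar{a}$, and conjoining the $\Pi_1$ conditions $\langle\bar{x}\rangle\cong\langle\bar{b}\rangle$, yields a $\Pi_1$ definition of the orbit of $\bar{b}$.

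\medskip\noindent\textbf{The implication $(3)\Rightarrow(2)$.} Suppose the orbit of the generating tuple $\bar{a}$ is defined by a $\Pi_1$ formula $\theta(\bar{x})$. Then I would form the sentence
\[
(\exists\bar{x})\,\theta(\bar{x})\ \&\ (\forall\bar{x})\bigl[\theta(\bar{x})\rightarrow(\forall y)\textstyle\bigvee_w w(\bar{x})=y\bigr].
\]
The first conjunct is $\Sigma_2$ (an existential over a $\Pi_1$ matrix), and the second is $\Pi_2$, since $(\forall y)\bigvee_w w(\bar{x})=y$ is $\Pi_2$ and $\theta$ is $\Pi_1$, so the implication collapses to $\Pi_2$. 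The conjunction is therefore $d$-$\Sigma_2$. I would verify it is a Scott sentence exactly as in the $\Sigma_3$ case: the first conjunct forces a tuple in the orbit of $\bar{a}$, and the second forces that tuple to generate, so any model contains a generating tuple realizing the same relators and non-relators as $\bar{a}$, hence is isomorphic to $G$.

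\medskip\noindent\textbf{The main obstacle: $(1)\Rightarrow(3)$.} This is where the effective/non-effective results from Section 2 do the heavy lifting, and I expect it to be the crux. Given a $\Pi_3$ Scott sentence for $G$, Theorem \ref{MontalbanVariant} (with $\alpha=3$) tells me that the orbit of every tuple is defined by a $\Sigma_{<3}$ formula, i.e.\ a $\Sigma_2$ formula; in particular the orbit of a generating tuple $\bar{a}$ has a $\Sigma_2$ definition $\varphi_{\bar{a}}(\bar{x})$. The difficulty is to improve this from $\Sigma_2$ down to $\Pi_1$. Here I would exploit that $\bar{a}$ generates: because every element of $G$ is a word in $\bar{a}$, the existential witnesses appearing in the $\Sigma_2$ formula $\varphi_{\bar{a}}$ can be named by fixed words in $\bar{x}$, allowing the existential quantifiers to be eliminated in favor of the $\Pi_1$-type information $\langle\bar{x}\rangle\cong\langle\bar{a}\rangle$ together with the finitely many word-equations that pin down those witnesses. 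Concretely, I expect that any tuple satisfying $\langle\bar{x}\rangle\cong\langle\bar{a}\rangle$ and generating lies in the orbit of $\bar{a}$, and that the generation requirement, once the witnesses are replaced by words, is already captured by the relator/non-relator data; carrying out this reduction rigorously—showing that $\langle\bar{x}\rangle\cong\langle\bar{a}\rangle$ alone, a genuinely $\Pi_1$ formula, already defines the orbit—is the main technical step and the one requiring the most care.
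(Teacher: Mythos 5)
Your skeleton of implications would suffice if each step held, and three of them do: $(2)\Rightarrow(1)$, $(4)\Rightarrow(3)$, and your direct $(3)\Rightarrow(4)$ (substituting the words $\bar{v}$ with $\bar{a}=\bar{v}(\bar{b})$ into $\theta$ and conjoining the quantifier-free type of $\bar{b}$ is correct, and is a route the paper does not take). But the crux, $(1)\Rightarrow(3)$, fails as you have planned it. After invoking Theorem \ref{MontalbanVariant} to get a $\Sigma_2$ definition $\bigvee_i(\exists\bar{u}_i)\varphi_i(\bar{x},\bar{u}_i)$ of the orbit of $\bar{a}$, you correctly observe that the existential witnesses can be named by words $\bar{w}(\bar{x})$ --- and then you abandon that observation and declare the ``main technical step'' to be showing that $\langle\bar{x}\rangle\cong\langle\bar{a}\rangle$ alone defines the orbit. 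That statement is false in general, even for groups satisfying (1): take $G=\mathbb{Z}$ and $a=1$. The formula $\langle x\rangle\cong\langle a\rangle$ merely says that $x$ has infinite order, so it defines $\mathbb{Z}\setminus\{0\}$, whereas the orbit of $a$ is $\{1,-1\}$; the orbit \emph{is} $\Pi_1$-definable, but only via genuinely universal formulas such as $\bigwedge_{n\geq 2}(\forall y)(y^n\neq x)$, not via the quantifier-free type. The correct finish is precisely the substitution you dropped: choose $i$ and $\bar{b}$ with $G\models\varphi_i(\bar{a},\bar{b})$, write $\bar{b}=\bar{w}(\bar{a})$, and check that the $\Pi_1$ formula $\varphi_i(\bar{x},\bar{w}(\bar{x}))$ defines the orbit --- any tuple satisfying it satisfies the $\Sigma_2$ definition, hence lies in the orbit, and conversely any $\sigma(\bar{a})$ satisfies it because $\sigma$ preserves the truth of $\varphi_i(\bar{a},\bar{w}(\bar{a}))$. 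This is exactly the paper's proof of $(1)\Rightarrow(4)$; no claim about the quantifier-free type is needed, or true.

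The same conflation creates a gap in your $(3)\Rightarrow(2)$. Your proposed sentence $(\exists\bar{x})\theta(\bar{x})\ \&\ (\forall\bar{x})[\theta(\bar{x})\rightarrow(\forall y)\bigvee_w w(\bar{x})=y]$ omits any reference to the relators and non-relators of $\bar{a}$, yet your verification asserts that a $\theta$-tuple in an arbitrary countable model ``realizes the same relators and non-relators as $\bar{a}$.'' That is a non sequitur: $\theta$ is just \emph{some} $\Pi_1$ formula whose realizations in $G$ happen to form the orbit; in another structure its realizations need carry no such information. Indeed, one can rig $\theta$ by inserting quantifier-free disjuncts that are vacuous in $G$ (e.g.\ $x^p=1\wedge x\neq 1\wedge\dots$) so that the resulting sentence is also satisfied by a group wildly different from $G$, such as a Tarski monster; so the sentence as written is genuinely not a Scott sentence for arbitrary $\theta$. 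The repair is exactly Ho's sentence, which the paper uses for this step: replace $\theta$ by $\theta\wedge\langle\bar{x}\rangle\cong\langle\bar{a}\rangle$, which is still $\Pi_1$ and still defines the orbit; then a realizing tuple in any model both generates that model and satisfies the same relators and non-relators as $\bar{a}$, which is what forces an isomorphism with $G$. With these two repairs your cycle closes and recovers the theorem (essentially along the lines of the paper's ``alternative proof,'' avoiding A.\ Miller's separation theorem); without them, it does not.
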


\begin{proof}

Clearly, $(2)\Rightarrow (1)$.  Using the result of A.\ Miller, together with the fact that $G$ has a $\Sigma_3$ Scott sentence, we get $(1)\Rightarrow (2)$.  To complete the proof, we will show that $(1)\Rightarrow (4)\Rightarrow (3)\Rightarrow (1)$.  For $(1)\Rightarrow (4)$, suppose $G$ has a $\Pi_3$ Scott sentence, and let $\bar{a}$ be a generating tuple.  By the result of Montalb\'{a}n, the orbit of $\bar{a}$ is defined by a computable $\Sigma_2$ formula 
$\varphi(\bar{x}) = \bigvee_i(\exists\bar{u}_i)\varphi_i(\bar{x},\bar{u}_i)$, where 
$\varphi_i$ is $\Pi_1$.  Take $i$ and $\bar{b}$ such that $G\models\varphi_i(\bar{a},\bar{b})$.  For some tuple of words $\bar{w}$, $G\models\bar{w}(\bar{a}) = \bar{b}$.  Then the orbit of $\bar{a}$ is defined by the $\Pi_1$ formula 
$\varphi_i(\bar{x},\bar{w}(\bar{x}))$.  Clearly, $(4)\Rightarrow (3)$.  To show that $(3)\Rightarrow (1)$, suppose $\bar{a}$ is a generating tuple with orbit defined by a $\Pi_1$ formula $\psi(\bar{u})$.  We show that for all tuples $\bar{b}$, the orbit is defined by a $\Sigma_2$ formula.  Suppose 
$G\models\bar{b} = \bar{w}(\bar{a})$.  Then the orbit of $\bar{b}$ is defined by the 
$\Sigma_2$-formula
$\varphi(\bar{x}) = (\exists\bar{u})(\psi(\bar{u})\ \&\ \bar{x} = \bar{w}(\bar{u}))$.  Then by the result of Montalb\'{a}n, $G$ has a $\Pi_3$ Scott sentence.      
\end{proof}

In this proof of Theorem \ref{characterization} above, we used Miller's result to show that if $G$ has a $\Pi_3$ Scott sentence, then it has a $d$-$\Sigma_2$ Scott sentence.  There is an alternative proof, using the following result of Ho \cite{Ho}.  

\begin{lem} [Generating Set Lemma]

Let $G$ be a computable finitely generated group, and suppose $\varphi(\bar{x})$ is a computable $\Sigma_2$ formula, satisfied in $G$, such that all tuples satisfying $\varphi(\bar{x})$ generate $G$.  Then $G$ has a computable $d$-$\Sigma_2$ Scott sentence.  

\end{lem}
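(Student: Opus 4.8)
The plan is to exhibit an explicit Scott sentence of the form $\sigma = \sigma_1\wedge\sigma_2$ with $\sigma_1$ computable $\Sigma_2$ and $\sigma_2$ computable $\Pi_2$, so that $\sigma$ is computable $d$-$\Sigma_2$. First I would fix a tuple $\bar{a}$ with $G\models\varphi(\bar{a})$, which is possible since $\varphi$ is satisfied in $G$; by hypothesis $\bar{a}$ generates $G$. I then take
\[\sigma_1:\ (\exists\bar{x})[\langle\bar{x}\rangle\cong\langle\bar{a}\rangle\ \&\ \varphi(\bar{x})],\qquad \sigma_2:\ (\forall\bar{x})[\varphi(\bar{x})\rightarrow (\forall y)\bigvee_w w(\bar{x})=y],\]
where the inner disjunction ranges over all group words $w$. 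For the complexity bookkeeping: $\langle\bar{x}\rangle\cong\langle\bar{a}\rangle$ is computable $\Pi_1$ (as noted in the text, the relator/non-relator set of $\bar{a}$ is computable in $G$), so $\sigma_1$, being an existential over the conjunction of a $\Pi_1$ formula and the $\Sigma_2$ formula $\varphi$, is computable $\Sigma_2$. The generation clause $(\forall y)\bigvee_w w(\bar{x})=y$ is computable $\Pi_2$, so $\varphi(\bar{x})\rightarrow(\cdots)$ is $neg(\varphi)\vee(\cdots)$, a finite disjunction of $\Pi_2$ formulas, hence $\Pi_2$; its universal closure is again $\Pi_2$, so $\sigma_2$ is computable $\Pi_2$ and $\sigma$ is computable $d$-$\Sigma_2$.

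Next I would verify that $\sigma$ is a Scott sentence. That $G\models\sigma$ is immediate: $\bar{a}$ witnesses $\sigma_1$, and $\sigma_2$ is exactly the hypothesis that every tuple satisfying $\varphi$ generates $G$. For the converse, suppose $H$ is a countable group with $H\models\sigma$. From $\sigma_1$ we obtain $\bar{b}\in H$ with $H\models\varphi(\bar{b})$ and with $\langle\bar{b}\rangle\cong\langle\bar{a}\rangle$, i.e.\ $\bar{b}$ satisfies exactly the relators and non-relators that $\bar{a}$ does. From $\sigma_2$, applied to $\bar{b}$, we conclude that $\bar{b}$ generates $H$. Since $\bar{a}$ generates $G$ and $\bar{b}$ has the same atomic type, the map $w(\bar{a})\mapsto w(\bar{b})$ is well defined and injective (the relators agreeing forces well-definedness, the non-relators agreeing forces injectivity) and is clearly a homomorphism; it is onto because $\bar{b}$ generates $H$. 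Thus it is an isomorphism of $G$ onto $\langle\bar{b}\rangle=H$, so $H\cong G$. To deal with structures $H$ that are not groups, I would conjoin the finitary universal group axioms; these are $\Pi_1$ and are absorbed into $\sigma_2$ without changing its complexity.

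The step I expect to carry the real content is the choice of how to spend the two halves of the $d$-$\Sigma_2$ budget, rather than any hard calculation. The naive Scott sentence $(\exists\bar{x})[\langle\bar{x}\rangle\cong\langle\bar{a}\rangle\ \&\ (\forall y)\bigvee_w w(\bar{x})=y]$ is only $\Sigma_3$, because the generation clause is $\Pi_2$ sitting under an existential quantifier, and this cannot be avoided by direct manipulation. The whole point is that one must not bind the $\Pi_2$ generation clause existentially: inside the existential I use the cheaper $\Sigma_2$ surrogate $\varphi$, and I certify separately, by the global $\Pi_2$ sentence $\sigma_2$, that satisfying $\varphi$ entails generating. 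The sentence $\sigma_2$ is true in $G$ precisely because of the hypothesis on $\varphi$, and this is the single place where that hypothesis is used in an essential way; everything else is routine verification of closure properties and the standard presentation-isomorphism argument.
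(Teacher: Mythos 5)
Your proposal is correct and is essentially the paper's own proof: your $\sigma_1$ and $\sigma_2$ are exactly the two conjuncts of Ho's Scott sentence as given in the paper (the $\Sigma_2$ sentence $(\exists\bar{x})[\varphi(\bar{x})\ \&\ \langle\bar{x}\rangle\cong\langle\bar{a}\rangle]$ and the $\Pi_2$ sentence $(\forall\bar{x})[\varphi(\bar{x})\rightarrow(\forall y)\bigvee_w w(\bar{x})=y]$). The only difference is that you spell out the verification (complexity bookkeeping, the word-map isomorphism argument, and the group axioms), which the paper leaves implicit.
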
 

\begin{proof}

Let $G = \langle \bar{a} \rangle$, where $\bar{a}$ satisfies the formula $\varphi(\bar{x})$.  

Ho's Scott sentence is the conjunction of the following:
\begin{enumerate}

\item  the computable $\Pi_2$ sentence saying $(\forall\bar{x})[\varphi(\bar{x})\rightarrow (\forall y)\bigvee_{w} w(\bar{x}) = y]$,

\item the computable $\Sigma_2$ sentence saying $(\exists \bar{x})[\varphi(\bar{x})\ \&\ \langle\bar{x}\rangle\cong \langle \bar{a} \rangle]$.
\end{enumerate}
\end{proof} 

We automatically have the following non-effective analogue of Ho's result.

\begin{lem}

Suppose $G$ is a finitely generated group, and there is a $\Sigma_2$ formula $\varphi(\bar{x})$, satisfied in $G$, and such that all tuples satisfying $\varphi(\bar{x})$ generate $G$.  Then $G$ has a $d$-$\Sigma_2$ Scott sentence.

\end{lem}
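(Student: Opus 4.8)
The plan is to run Ho's argument from the Generating Set Lemma essentially verbatim, simply deleting the word \emph{computable} everywhere and checking that nothing in that argument genuinely used effectiveness. Fix a generating tuple $\bar{a}$ of $G$ satisfying $\varphi(\bar{x})$; such a tuple exists because $\varphi$ is satisfied in $G$ and, by hypothesis, every satisfying tuple generates $G$. I would then propose as the Scott sentence the conjunction of the $\Pi_2$ sentence $(\forall\bar{x})[\varphi(\bar{x})\rightarrow(\forall y)\bigvee_w w(\bar{x})=y]$ with the $\Sigma_2$ sentence $(\exists\bar{x})[\varphi(\bar{x})\ \&\ \langle\bar{x}\rangle\cong\langle\bar{a}\rangle]$ --- exactly the two conjuncts in Ho's proof, now with no computability constraint imposed on $\varphi$ or on the sentence itself.

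The verification splits into a complexity count and a correctness check. For the complexity, since $\varphi$ is $\Sigma_2$ and $\langle\bar{x}\rangle\cong\langle\bar{a}\rangle$ is $\Pi_1$ (hence also $\Sigma_2$), prefixing their conjunction by $\exists\bar{x}$ keeps the second conjunct $\Sigma_2$; and since $neg(\varphi)$ is $\Pi_2$ while $(\forall y)\bigvee_w w(\bar{x})=y$ is $\Pi_2$, the implication under $\forall\bar{x}$ is $\Pi_2$, so the full conjunction is $d$-$\Sigma_2$. For correctness, $G$ satisfies the first conjunct because every tuple satisfying $\varphi$ generates $G$, and it satisfies the second because $\bar{a}$ witnesses it. The decisive direction is the converse: given any countable model $\mathcal{M}$ of the conjunction, the second conjunct yields a tuple $\bar{b}$ with $\varphi(\bar{b})$ and $\langle\bar{b}\rangle\cong\langle\bar{a}\rangle$, and the first conjunct forces $\bar{b}$ to generate $\mathcal{M}$. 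I would then check that $w(\bar{a})\mapsto w(\bar{b})$ is a well-defined isomorphism $G\to\mathcal{M}$: well-defined and injective because $\bar{a}$ and $\bar{b}$ satisfy exactly the same relators and non-relators, surjective because $\bar{b}$ generates $\mathcal{M}$, and a homomorphism because it is defined word-by-word. Hence $\mathcal{M}\cong G$.

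I expect no real obstacle here, which is precisely why the result is stated as an automatic analogue of Ho's. The decisive step --- the word map --- is purely algebraic and never appealed to the effectivity of $\varphi$, so dropping the computability hypotheses costs nothing. The only point deserving a moment's attention is the bookkeeping that places the conjunction at the $d$-$\Sigma_2$ level rather than higher, and this follows immediately from $\Pi_1\subseteq\Sigma_2$ together with the closure of $\Pi_2$ and $\Sigma_2$ under the universal and existential quantifiers applied above.
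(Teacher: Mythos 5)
Your proposal is correct and is exactly the paper's intended argument: the paper states this lemma as an ``automatic'' non-effective analogue of Ho's Generating Set Lemma, meaning precisely that one takes Ho's two conjuncts $(\forall\bar{x})[\varphi(\bar{x})\rightarrow(\forall y)\bigvee_w w(\bar{x})=y]$ and $(\exists\bar{x})[\varphi(\bar{x})\ \&\ \langle\bar{x}\rangle\cong\langle\bar{a}\rangle]$ with the computability constraints deleted. Your complexity bookkeeping and the word-map verification $w(\bar{a})\mapsto w(\bar{b})$ fill in the details at the same level of rigor as the paper itself.
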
  

\begin{proof} [Alternative proof of Theorem \ref{characterization}]

If there is a $\Pi_3$ Scott sentence, then by the result of Montalb\'{a}n, there is a $\Sigma_2$ formula defining the orbit of a generating tuple.  Then by the analogue of the result of Ho, there is a $d$-$\Sigma_2$ Scott sentence.  
\end{proof} 

\subsection{Computable finitely generated groups with a computable $d$-$\Sigma_2$ Scott sentence} 

In \cite{KS}, it is observed that every computable group has a computable $\Sigma_3$ Scott sentence.  We can prove the following.   

\begin{thm}
\label{characterization.effective}

For a computable finitely generated group $G$, the following are equivalent:

\begin{enumerate}

\item  there is a computable $\Pi_3$ Scott sentence,

\item  there is a computable $d$-$\Sigma_2$ Scott sentence,

\item  for some generating tuple, the orbit is defined by a computable $\Pi_1$ formula,

\item  for each generating tuple, the orbit is defined by a computable $\Pi_1$ formula.  

\end{enumerate}

\end{thm}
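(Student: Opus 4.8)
The plan is to follow the proof of Theorem \ref{characterization} line for line, replacing each non-effective ingredient by the effective counterpart established earlier in the paper. The implication $(2)\Rightarrow(1)$ is immediate: a $d$-$\Sigma_2$ sentence is the conjunction of a $\Sigma_2$ and a $\Pi_2$ sentence, and both $\Sigma_2$ and $\Pi_2$ sentences are $\Pi_3$, so their conjunction is $\Pi_3$; hence a computable $d$-$\Sigma_2$ Scott sentence is in particular computable $\Pi_3$. For $(1)\Rightarrow(2)$, I would use the observation of \cite{KS} that a computable finitely generated group has a computable $\Sigma_3$ Scott sentence, and then apply the Main Lemma with $\alpha=2$: from a computable $\Sigma_3$ Scott sentence together with the computable $\Pi_3$ Scott sentence furnished by (1), we obtain a computable $d$-$\Sigma_2$ Scott sentence. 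It then remains to run the cycle $(1)\Rightarrow(4)\Rightarrow(3)\Rightarrow(1)$.

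For $(1)\Rightarrow(4)$, let $\bar{a}$ be any generating tuple. By the effective version of Montalb\'{a}n's theorem (Theorem \ref{effectiveMontalban}, applied with $\alpha=3$), the orbit of $\bar{a}$ is defined by a computable $\Sigma_2$ formula $\varphi(\bar{x})=\bigvee_i(\exists\bar{u}_i)\varphi_i(\bar{x},\bar{u}_i)$ with each $\varphi_i$ computable $\Pi_1$. Choose $i$ and $\bar{b}$ with $G\models\varphi_i(\bar{a},\bar{b})$; since $\bar{a}$ generates $G$, there is a tuple of words $\bar{w}$ with $\bar{b}=\bar{w}(\bar{a})$. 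Substituting the terms $\bar{w}(\bar{x})$ for $\bar{u}_i$ yields a computable $\Pi_1$ formula $\varphi_i(\bar{x},\bar{w}(\bar{x}))$, and a short automorphism argument shows it defines the orbit of $\bar{a}$: any realization satisfies one disjunct of $\varphi$ and hence lies in the orbit, while conversely an automorphism carrying $\bar{a}$ to a tuple in the orbit carries $\varphi_i(\bar{a},\bar{w}(\bar{a}))$ to its image. The implication $(4)\Rightarrow(3)$ is trivial.

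The substantive step is $(3)\Rightarrow(1)$, where I must actually produce a computable $\Pi_3$ Scott sentence, not merely argue that each orbit has a $\Sigma_2$ definition. Fix a generating tuple $\bar{a}$ whose orbit is defined by a computable $\Pi_1$ formula $\psi(\bar{u})$, with $|\bar{u}|=|\bar{a}|$. For each finite tuple of group words $\bar{w}$ in the variables $\bar{u}$, set $\varphi_{\bar{w}}(\bar{x})=(\exists\bar{u})(\psi(\bar{u})\ \&\ \bar{x}=\bar{w}(\bar{u}))$, a computable $\Sigma_2$ formula with $|\bar{x}|=|\bar{w}|$. The map $\bar{w}\mapsto\varphi_{\bar{w}}$ is computable, so $\Phi=\{\varphi_{\bar{w}}:\bar{w}\}$ is a uniformly indexed family of computable $\Sigma_2$ formulas with no parameters. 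I would then verify that $\Phi$ is a Scott family: every tuple $\bar{b}$ of $G$ equals $\bar{w}(\bar{a})$ for some $\bar{w}$ and so satisfies $\varphi_{\bar{w}}$; and if $\bar{b}'$ satisfies $\varphi_{\bar{w}}$ with witness $\bar{u}'$ satisfying $\psi$, then $\bar{u}'$ lies in the orbit of $\bar{a}$, so an automorphism sending $\bar{a}$ to $\bar{u}'$ sends $\bar{w}(\bar{a})$ to $\bar{w}(\bar{u}')=\bar{b}'$, placing $\bar{b}'$ in the orbit of $\bar{w}(\bar{a})$. Thus each $\varphi_{\bar{w}}$ defines an orbit and every orbit is so defined. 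Applying the Proposition that converts a $\Sigma_2$ Scott family of computable $\Sigma_2$ formulas without parameters into a computable $\Pi_3$ Scott sentence (the case $\alpha=2$), we conclude that $G$ has a computable $\Pi_3$ Scott sentence, closing the cycle.

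The main obstacle is exactly $(3)\Rightarrow(1)$. In the non-effective Theorem \ref{characterization} one simply cites the $\Leftarrow$ direction of Montalb\'{a}n's theorem, but here one needs its effective form, which demands an \emph{actual uniformly computable} $\Sigma_2$ Scott family rather than merely orbit-by-orbit $\Sigma_2$ definitions. The idea that makes the family uniform is to exploit the generating tuple: substituting the (effectively enumerable) group words $\bar{w}$ into the single computable $\Pi_1$ orbit formula $\psi$ produces all the required orbit formulas by one computable procedure, so the resulting family is c.e.\ and the Proposition applies.
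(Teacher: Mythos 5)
Your proposal is correct, but it closes the cycle of implications differently from the paper. The paper proves $(2)\Rightarrow(1)\Rightarrow(4)\Rightarrow(3)\Rightarrow(2)$, where the steps $(2)\Rightarrow(1)$, $(1)\Rightarrow(4)$, and $(4)\Rightarrow(3)$ are exactly as in your write-up, but the final step is $(3)\Rightarrow(2)$: the computable $\Pi_1$ orbit formula is regarded as computable $\Sigma_2$, and Ho's Generating Set Lemma then yields a computable $d$-$\Sigma_2$ Scott sentence directly. You instead prove $(3)\Rightarrow(1)$ by building the uniformly c.e.\ family $\Phi = \{\varphi_{\bar w}\}$ of computable $\Sigma_2$ orbit formulas (substituting words into the single formula $\psi$) and invoking the Proposition of Section 2 that converts a parameter-free $\Sigma_2$ Scott family of computable $\Sigma_2$ formulas into a computable $\Pi_3$ Scott sentence; you then recover $(1)\Rightarrow(2)$ from the Main Lemma together with the computable $\Sigma_3$ Scott sentence of Knight--Saraph. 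Both routes are sound: your verification that $\Phi$ is a Scott family is correct, the relation pairing indices of members of $\Phi$ with the tuples satisfying them is indeed $\Sigma^0_2$ (c.e.\ membership conjoined with $\Sigma^0_2$ truth in a computable structure), so the Proposition applies with $\alpha = 2$. What your approach buys is a proof that mirrors the non-effective Theorem \ref{characterization} step for step and makes explicit the key conceptual point: the effective converse of Montalb\'an's theorem fails in general (the paper constructs a counterexample), but the generating tuple supplies exactly the uniformity that repairs it. What the paper's route buys is economy for this particular theorem --- it needs only Theorem \ref{effectiveMontalban} and Ho's lemma, bypassing both the Main Lemma and the Scott family Proposition, and it produces the $d$-$\Sigma_2$ sentence in one explicit step rather than passing through a $\Pi_3$ sentence first.
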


\begin{proof}

We show that $(2)\Rightarrow (1)\Rightarrow (4)\Rightarrow (3)\Rightarrow (2)$.
For $(2)\Rightarrow (1)$, we note that a computable $d$-$\Sigma_2$ Scott sentence may be regarded as a computable $\Pi_3$ Scott sentence.  
For $(1)\Rightarrow (4)$, if $G$ has a computable $\Pi_3$ Scott sentence, and 
$\bar{a}$ is a generating tuple, then by Theorem \ref{effectiveMontalban}, the orbit of $\bar{a}$ is defined by a computable $\Sigma_2$ formula $\varphi(\bar{x}) = \bigvee_{i \in W}(\exists\bar{u}_i)\varphi_i(\bar{x},\bar{u}_i)$, where each $\varphi_i$ is computable $\Pi_1$, and $W$ is some c.e. set.  For some $i \in W$ and some $\bar{b}$, $G\models\varphi_i(\bar{a},\bar{b})$.  For some tuple of words $\bar{w}$, $G\models\bar{w}(\bar{a}) = \bar{b}$.
Then the orbit of $\bar{a}$ is defined by the computable $\Pi_1$ formula 
$\varphi_i(\bar{x},\bar{w}(\bar{x}))$.
Clearly, $(4)\Rightarrow (3)$.
For $(3)\Rightarrow (2)$, let $\bar{a}$ be a generating tuple whose orbit is defined by a computable $\Pi_1$ formula.  We may regard this as a computable $\Sigma_2$ formula.  By the result of Ho, $G$ has a computable $d$-$\Sigma_2$ Scott sentence.  
\end{proof}

\subsection{Example of Harrison-Trainor and Ho}

Ho and Harrison-Trainor \cite{HH} gave the definition below. They considered not just finitely generated groups, but more general finitely generated structures.  We consider only groups.

\begin{defn} [Harrison-Trainor-Ho]

A finitely generated group $G$ is \emph{self-reflective} if there is a generating tuple $\bar{a}$ and a tuple $\bar{b}$ generating a proper subgroup $H$, such that
\[(G,\bar{a})\cong (H,\bar{b})\] 
and every existential formula true of $\bar{b}$ in $G$ is true of $\bar{b}$ in $H$.  

\end{defn} 

\begin{prop}

Let $G$ be a finitely generated group.  Then $G$ is self-reflective iff there is a generating tuple $\bar{a}$ whose orbit is not defined by a $\Pi_1$ formula.

\end{prop}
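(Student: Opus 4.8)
The plan is to route both implications through the existential (equivalently, the $\Pi_1$) type of a generating tuple. First I would record the standard description of orbits of generating tuples: for a generating tuple $\bar{a}$, a tuple $\bar{x}$ lies in the orbit $O_{\bar{a}}$ exactly when $\bar{x}$ generates $G$ and $\langle\bar{x}\rangle\cong\langle\bar{a}\rangle$; moreover any $\bar{x}$ with $\langle\bar{x}\rangle\cong\langle\bar{a}\rangle$ yields an isomorphism $\bar{a}\mapsto\bar{x}$ of $G$ onto the subgroup $\langle\bar{x}\rangle$, which is an automorphism precisely when $\langle\bar{x}\rangle=G$. The technical heart is a reduction: let $\Psi_{\bar{a}}(\bar{x})$ be the conjunction of all finitary universal formulas $(\forall\bar{u})\,\psi(\bar{x},\bar{u})$ true of $\bar{a}$ in $G$. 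Since the language is countable and the matrices $\psi$ are finitary quantifier-free, there are only countably many such conjuncts, so $\Psi_{\bar{a}}$ is a single $\Pi_1$ formula of $L_{\omega_1\omega}$, and it implies every $\Pi_1$ formula true of $\bar{a}$. Because automorphisms preserve $\Psi_{\bar{a}}$, one always has $O_{\bar{a}}\subseteq Mod(\Psi_{\bar{a}})$, and I would argue that $O_{\bar{a}}$ is $\Pi_1$-definable iff $O_{\bar{a}}=Mod(\Psi_{\bar{a}})$: if a $\Pi_1$ formula defines $O_{\bar{a}}$ then it is a conjunction of finitary universal formulas true of $\bar{a}$, hence implied by $\Psi_{\bar{a}}$, forcing the reverse inclusion. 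Note that $Mod(\Psi_{\bar{a}})$ is exactly the set of tuples whose finitary existential type in $G$ is contained in that of $\bar{a}$.

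For the implication from self-reflection, I would take the witnessing data $\bar{a}$, $\bar{b}$, and $H=\langle\bar{b}\rangle\subsetneq G$ with $(G,\bar{a})\cong(H,\bar{b})$. The isomorphism gives that the existential type of $\bar{b}$ computed in $H$ equals the existential type of $\bar{a}$ in $G$; the self-reflection clause says the existential type of $\bar{b}$ in $G$ is contained in its existential type in $H$, while the automatic upward persistence of existential formulas along $H\hookrightarrow G$ gives the reverse containment. Chaining these three facts shows $\bar{a}$ and $\bar{b}$ have the same finitary existential type in $G$, hence $\bar{b}\in Mod(\Psi_{\bar{a}})$. Since $\bar{b}$ generates only the proper subgroup $H$, it does not generate $G$, so $\bar{b}\notin O_{\bar{a}}$. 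Thus $O_{\bar{a}}\subsetneq Mod(\Psi_{\bar{a}})$, and by the reduction $O_{\bar{a}}$ is not $\Pi_1$-definable; this is a generating tuple with non-$\Pi_1$ orbit.

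For the converse I would reverse the bookkeeping. Given a generating tuple $\bar{a}$ whose orbit is not $\Pi_1$-definable, the reduction produces $\bar{b}\in Mod(\Psi_{\bar{a}})\setminus O_{\bar{a}}$. Satisfying $\Psi_{\bar{a}}$ forces $\langle\bar{b}\rangle\cong\langle\bar{a}\rangle$ (the relators and non-relators of $\bar{a}$ are among the finitary universal formulas true of $\bar{a}$), so $\bar{a}\mapsto\bar{b}$ is an isomorphism of $G$ onto $H:=\langle\bar{b}\rangle$, and $\bar{b}\notin O_{\bar{a}}$ forces $H\subsetneq G$; hence $(G,\bar{a})\cong(H,\bar{b})$ with $H$ proper. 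Finally, $\bar{b}\in Mod(\Psi_{\bar{a}})$ means the existential type of $\bar{b}$ in $G$ is contained in that of $\bar{a}$ in $G$, which via the isomorphism equals the existential type of $\bar{b}$ in $H$; this is exactly the requirement that every existential formula true of $\bar{b}$ in $G$ holds of $\bar{b}$ in $H$. So $G$ is self-reflective. (The statement quantifies over ``some'' generating tuple, which agrees with the ``each generating tuple'' phrasing by the equivalence of clauses (3) and (4) of Theorem \ref{characterization}, should it be invoked.)

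I expect the main obstacle to be the careful handling of existential types relative to the two ambient groups $G$ and $H$: one must keep straight which inclusion comes from the isomorphism $(G,\bar{a})\cong(H,\bar{b})$, which from the asymmetric self-reflection clause, and which from upward persistence of existential formulas along $H\hookrightarrow G$, since it is precisely the interplay of these three facts that yields the equality of existential types driving both directions. A secondary point to get right is that the maximal $\Pi_1$ formula $\Psi_{\bar{a}}$ genuinely lives in $L_{\omega_1\omega}$ (a countable conjunction), which relies on there being only countably many finitary universal matrices.
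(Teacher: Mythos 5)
Your proposal is correct and takes essentially the same route as the paper: both directions hinge on the conjunction $\Psi_{\bar{a}}$ of all finitary universal formulas true of $\bar{a}$, the isomorphism $\bar{a}\mapsto\bar{b}$ induced by agreement on quantifier-free formulas, and persistence of existential formulas, exactly as in the paper's argument. Your explicit reduction lemma ($O_{\bar{a}}$ is $\Pi_1$-definable iff $O_{\bar{a}}=Mod(\Psi_{\bar{a}})$) is left implicit in the paper, and in fact tidies up the paper's slightly terse claim that a witness $\bar{b}$ satisfying $\Psi_{\bar{a}}$ but not generating $G$ must exist.
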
 

\begin{proof}

$\Rightarrow$  Suppose that $G$ is self-reflective, witnessed by $\bar{a}$ and $\bar{b}$, where $\bar{a}$ generates $G$, $\bar{b}$ generates a proper subgroup $H$, and all existential formulas true of $\bar{b}$ in $G$ are also true of $\bar{b}$ in $H$.  Since $(G,\bar{a})\cong (H,\bar{b})$, these existential formulas are also true of $\bar{a}$ in $G$. Therefore, the universal formulas true of $\bar{a}$ in $G$ are also true of $\bar{b}$ in $G$.  This means that any $\Pi_1$ formula true of $\bar{a}$ is also true of $\bar{b}$. Then the orbit of $\bar{a}$ is not defined by a $\Pi_1$ formula.

$\Leftarrow$  Suppose that $G$ is generated by a tuple $\bar{a}$ whose orbit is not defined by a $\Pi_1$ formula.  Let $\varphi(\bar{x})$ be the $\Pi_1$ formula obtained as the conjunction of the universal formulas true of $\bar{a}$.  There must be some $\bar{b}$ satisfying $\varphi(\bar{x})$ and not generating $G$.  Let $H$ be the subgroup generated by $\bar{b}$.  Then $(G,\bar{a})\cong (H,\bar{b})$, and all existential formulas true of $\bar{b}$ in $G$ are true of $\bar{a}$ in $G$ and hence true of $\bar{b}$ in $H$.  This means that $G$ is self-reflective.     
\end{proof} 

Harrison-Trainor and Ho constructed a computable finitely generated group $G$ that is self-reflective.  They also showed that for any such group, the index set is $m$-complete $\Sigma^0_3$.  In this way, they arrived at the fact that their group $G$ has no computable $d$-$\Sigma_2$ Scott sentence.  Relativizing, they got the fact that for any set $X$, the set of $X$-computable indices for copies of $G$ is $m$-complete $\Sigma^0_3$ relative to $X$, so there is no $X$-computable $d$-$\Sigma_2$ Scott sentence.  It follows that the group $G$ has no $d$-$\Sigma_2$ Scott sentence.    
\section{Problems}

\begin{enumerate}

\item  Is there a finitely presented group that is self-reflective?  

\item  Is there a precise sense in which most finitely generated groups are not self-reflective?  Can we say in terms of limiting density that the typical finitely generated group has a $d$-$\Sigma_2$ Scott sentence?   

\item  Is there a computable finitely generated group with a $d$-$\Sigma_2$ Scott sentence but no computable $d$-$\Sigma_2$ Scott sentence?  

\item  Give necessary and sufficient conditions for a computable structure $\mathcal{A}$ to have a computable 
$\Pi_{\alpha+1}$ Scott sentence.  

\end{enumerate}  
     

\end{document}